\documentclass[11pt,reqno]{amsart}
\usepackage[top=1.2in, bottom=1.2in, left=1.3in, right=1.3in]{geometry}
\usepackage{amsfonts, amssymb, amsmath, enumerate, bm, xspace}
\usepackage[linktoc=page, colorlinks, linkcolor=blue, citecolor=blue,urlcolor = black]{hyperref}
\usepackage{pbox,url,array,makecell,algorithm2e}

\usepackage{epsfig,subcaption,graphicx}
\graphicspath{{figures/}}
\usepackage{epstopdf}
\usepackage{caption}
\usepackage{floatrow,pdfsync} %


\numberwithin{equation}{section}


\DeclareMathOperator{\E}{\mathbb{E}}

\renewcommand{\Pr}[2][]{\mathbb{P}_{#1} \left\{ #2 \rule{0mm}{3mm}\right\}}

\def \P {\mathbb{P}}
\def \R {\mathbb{R}}

\def \a {\alpha}
\def \b {\beta}

\def \e {\varepsilon}
\def \d {\delta}

\def \tran {\mathsf{T}}

\def \onevector {{\bf 1}}


\newtheorem{theorem}{Theorem}[section]

\newtheorem{corollary}[theorem]{Corollary}
\newtheorem{lemma}[theorem]{Lemma}

\theoremstyle{remark}

\newtheorem{algo}{Algorithm}


\begin{document}

\title[]{Edge sampling using network local information}

\author{Can M. Le}
\address{University of California, Davis}
\email{canle@ucdavis.edu}

\thanks{}


\begin{abstract}
Edge sampling is an important topic in network analysis. It provides a natural way to reduce network size while retaining desired features of the original network. Sampling methods that only use local information are common in practice as they do not require access to the entire network and can be parallelized easily.        Despite promising empirical performances, most of these methods are derived from heuristic considerations and therefore still lack theoretical justification. To address this issue, we study in this paper a simple edge sampling scheme that uses network local information. We show that when local connectivity is sufficiently strong, the sampled network  satisfies a strong spectral property.  
We quantify the strength of local connectivity by a global parameter and relate it to more common network statistics such as the clustering coefficient and network curvature. 
Based on this result, we also provide sufficient conditions under which random networks and hypergraphs can be sampled efficiently. 
\end{abstract}


\maketitle

\section{Introduction}

Network analysis has become an important area in many research domains. It provides a natural way to model and analyze data with a complex interdependence among entities. A network typically consists of a set of nodes representing the entities of interest and a set of edges between nodes encoding the relations between the nodes. 
For example, in a social network such as Facebook or Twitter, nodes are users and there is an edge between two users if they are friends.
Studying the structure of a network provides valuable information about how entities interact and may help predict the formation of different groups \cite{Goldenberg2010,Fortunato2010}.

As real-world networks are often very large, it is difficult and often impossible to store or even get access to the entire data set. It is therefore desirable to preprocess the data to reduce the network size before performing any analysis. A natural method for this task is graph sparsification, a well-known edge sampling method in network literature  \cite{Benczur&Karger1996,Spielman&Teng2004,Spielman&Shrivastava2011}.
For a network of $n$ nodes, one samples edges independently with probabilities proportional to their effective resistances, i.e. the electrical resistances between the same nodes in the resistor network obtained from the original network by replacing edges with resistors  of unit conductance \cite{Ghosh&Boyd&Saberi2008}. It has been shown that sampling and storing $O(n\log n)$ weighted edges is sufficient for approximately preserving the important topological structure of the original network  \cite{Spielman&Shrivastava2011}. Specifically, for an undirected network $G=(V,E)$ with the set of nodes $V=\{1,2,...,n\}$ and the set of edges $E\subseteq V\times V$, let $A$ be the adjacency matrix with $A_{ij}=1$ if $(i,j)\in E$ and $A_{ij}=0$ otherwise. Let $L_G = D - A$ be the Laplacian, where $D$ is the diagonal matrix with node degrees $d_i = \sum_{j\in V} A_{ij}$ on the diagonal, and define the Laplacian $L_H$ for the weighted network $H$ output by graph sparsification in a similar way. Then $H$ satisfies the following inequality for every $x\in\R^n$, known as the strong spectral property:  
\begin{equation}\label{eq: spectral property}
(1-\e) x^\tran L_G x \le x^\tran L_H x \le (1+\e) x^\tran L_G x.
\end{equation}  
Although this method has a strong theoretical guarantee, a serious drawback it suffers from, especially when applied to very large networks, is that it requires access to the entire network for computing effective resistances of all edges. Also, the computation involves a complicated linear system solver of Spielman and Teng, not easy to implement in practice.
Although some improvements of 
\cite{Spielman&Shrivastava2011} have been proposed, they still rely on complicated linear system solvers \cite{Kelner&Levin2013,Kapralovetal2014}. 

To avoid this problem, several fast and simple edge sampling methods have been developed with more emphasis on preserving certain network features such as the number of connected components, network diameter, homophily, node centrality measures or community structure \cite{Newman2010}. One of the simplest sampling methods is uniform sampling, which samples edges independently and uniformly at random \cite{Sadhanala&Wang&Tibshirani2016,Li&Levina&Zhu2016}.
More adaptive methods leverage the strong local connectivity of networks that is widely observed in practice: the network neighborhoods of most of the nodes are surprisingly dense \cite{Watts&Strogatz1998,Uganderetal2011}.  They sample edges according to certain edge scores that can be calculated locally without access to the entire network such as the Jaccard similarity score \cite{Satuluri&Parthasarathy&Ruan2011}, the number of triangles \cite{Hamann.et.al.2016} or the number of quadrangles containing the edges under consideration \cite{Nocaj&Ortmann&Brandes2014}; see also \cite{Hamann.et.al.2016} for methods based on other local measures. Although these methods have been empirically shown to perform well and can be parallelized easily, to our best knowledge, there is still no theoretical guarantee for their performances. It is also unclear if other features of networks (besides the targeting features  considered) are preserved.  

In an attempt to understand the theoretical properties of these methods, in this paper we study  a fast and simple edge sampling scheme similar to methods that use Jaccard similarity or number of triangles \cite{Satuluri&Parthasarathy&Ruan2011,Hamann.et.al.2016}. Specifically, for an undirected network $G=(V,E)$,
we sample each edge $(i,j)\in E$ with probability inversely proportional to the number of common neighbors of $i$ and $j$ (i.e. those nodes connected to both $i$ and $j$). The numbers of common neighbors have been used in network literature, for example in the context of community detection \cite{Rohe&Qin2013} and network embedding \cite{Papadopoulos&Aldecoa&Krioukov2015}.

We observe that when the numbers of common neighbors are sufficiently large compared to node degrees, our sampling method satisfies the same strong spectral property \eqref{eq: spectral property} that the graph sparsification does, while avoiding the complicated calculation of effective resistances. This result also provides theoretical evidence supporting edge sampling methods based on local statistics \cite{Satuluri&Parthasarathy&Ruan2011,Nocaj&Ortmann&Brandes2014,Hamann.et.al.2016}. Qualitatively, as the number of common neighbors increases, the network local connectivity gets stronger and our sampling method becomes more similar to graph sparsification using effective resistances. In contrast, as the numbers of common neighbors decrease, the method becomes more similar to uniform sampling.
We quantify the strength of the network local connectivity by the following parameter 
\begin{equation}\label{eq: main assumption}
\a = \frac{1}{n}\sum_{(i,j)\in E} \frac{2}{t_{ij}+2},
\end{equation}
where $t_{ij}$ denotes the number of common neighbors of node $i$ and node $j$. As we will show, $\alpha$ is closely related to other well-known and similar in nature statistics such as the
clustering coefficient \cite{Watts&Strogatz1998} and network curvature \cite{Bauer&Urgen&Liu2012}; see Section~\ref{sec: bound on alpha} for the definition. More importantly, it determines the sample size (the number of sampled edges) needed for the strong spectral property to hold. 

\subsection{Our contributions}
We make the following contributions in this paper. First, in Section~\ref{sec: theory} we propose  a simple sampling method by leveraging the strong local connectivity that has been often observed for real-world networks and show that it satisfies the strong spectral property \eqref{eq: spectral property} if we sample $O(\alpha n\log n)$ edges, where $\alpha$ is defined by \eqref{eq: main assumption}. As a direct consequence, we show that uniform sampling with replacement also satisfies \eqref{eq: spectral property} if the sample size is sufficiently large; the exact value is given by \eqref{eq: uniform sample size}. This  requirement can be relaxed if 
a hybrid sampling method that combines both uniform sampling and sampling according to the number of common neighbors is used. 
Second, we provide lower and upper bounds on $\alpha$ for general networks in terms of the clustering coefficient and network curvature (Section~\ref{sec: bound on alpha}). Since $\alpha$ directly determines the sample size required for the strong spectral property, these bounds provide useful information about when our sampling method can be used efficiently. They also show a connection with other sampling methods that use different local statistics \cite{Satuluri&Parthasarathy&Ruan2011,Nocaj&Ortmann&Brandes2014,Hamann.et.al.2016} for which the theory developed in this paper may potentially  be applied. 
Third, in Section~\ref{sec: random graphs} we provide  an upper bound on $\alpha$ for the general inhomogeneous Erd\H{o}s-R\'{e}nyi random graph model \cite{Bollobas2007}. Since this model is very popular in network literature, the bound provides a rich class of examples for which our sampling method can be used for reducing the network size. We discuss in Section~\ref{sec: hypergraphs} another natural class of examples, the hypergraphs, for which our method can be found useful.      
Lastly, in Section~\ref{eq: simulation} we show  that $\alpha$ is small for many real-world networks and perform a thorough numerical study to evaluate our sampling method.   

\subsection{Related work}
The simplest sampling method is bond percolation, which independently selects edges  
with a fixed probability $\varepsilon$ \cite{Alon&Benjamini&Stacey2004,Nachmias2009,
Bollobas&Borgs&Chayes&Riordan2010}. 
If $\varepsilon$ is sufficiently large so that $\Omega(n\log n)$ edges are selected then with high probability the adjacency matrix of the sparsified network concentrates around $\varepsilon A$ by a standard matrix concentration result \cite{Oliveira2010}. 
The advantage of this method is that it is fast and only requires the total number of edges in the network as a global input parameter. However, it satisfies a much weaker property than the strong spectral property \cite{Spielman&Teng2004}. A closely related method is uniform sampling, for which \cite{Sadhanala&Wang&Tibshirani2016} shows that \eqref{eq: spectral property} holds with high probability, but only for smooth vectors $x$.  

In semi-streaming setting, \cite{Benczur&Karger1996} and \cite{Goel&Kapralov&Khanna2010} show that local network structure can be used to design sampling methods that approximately preserve all cuts of the original network; here, the cut of a set of nodes is the number of edges between that set and its complement in $V$. However, this property is strictly weaker than the strong spectral property that our method satisfies \cite{Kelner&Levin2013}.

\section{Edge sampling using common neighbors}\label{sec: theory}


For an undirected network $G=(V,E)$ and $(i,j)\in E$, let $t_{ij}$ be the number of common neighbors of $i$ and $j$. For simplicity of presentation, we first discuss the case when $t_{ij}$ are known for all edges. In practice, they can be either  exactly calculated in a parallel manner or approximated by neighbor sampling; see Section~\ref{sec: Tij calculation}  for a more detailed discussion. 
To form a sparsifier $H$, we sample $m$ edges of $G$ independently according to a multinomial distribution with probabilities
\begin{equation}\label{eq: pij}
p_{ij} = \frac{\frac{2}{t_{ij}+2}}{\sum_{(i,j)\in E}\frac{2}{t_{ij}+2}}.
\end{equation}
If an edge $(i,j)\in E$ is selected $k\ge 1$ times then we add it to $H$ and assign the weight $k(mp_{ij})^{-1}$ to it.   

Note that $2/(t_{ij}+2)$ is the effective resistance of the edge between $i$ and $j$ in a subgraph of $G$ consisting of the  edge $(i,j)$ and $t_{ij}$ paths of length two between $i$ and $j$. It is therefore an upper bound of the effective resistance of the edge between $i$ and $j$ in $G$; for a detailed explanation, see the proof of Theorem~\ref{thm: uniform edge sampling} in Appendix~\ref{subsec: proof of main theorems}. 
 
The following theorem shows that our sampling method satisfies the strong spectral property.   

\begin{theorem}[Sampling method using $t_{ij}$]\label{thm: main theorem}
Consider an undirected and connected network $G=(V,E)$. Let $\e\in(0,1)$ and $\alpha$ be the parameter of $G$ defined by \eqref{eq: main assumption}.
Form a weighted network $H$ by sampling $8\a n\log n/\e^2$ edges of $G$ as described above. Then $H$ satisfies the strong spectral property \eqref{eq: spectral property} with probability at least $1-1/n$. 
\end{theorem}

Parameter $\a$ measures the average strength of network local connectivity. To better understand $\a$, consider a special case when 
$d_i=d$ for all vertices $i$ and $t_{ij}=t$ for all edges $(i,j)\in E$. Then $\a\approx 2|E|/(nt)=d/t$, where here and after we use $|\mathcal{M}|$ to denote the number of elements of the set $\mathcal{M}$.  Thus, if $(i,j)\in E$ then the number of common neighbors of $i$ and $j$ is approximately $d/\alpha$. In other words, $i$ and $j$ share a fraction of $1/\a$   of their neighbors.

When the local connectivity is strong, i.e. $\alpha = O(1)$, Theorem~\ref{thm: main theorem} shows that we can approximately preserve the network topology if we locally sample and retain $O(n\log n)$ edges. In contrast, if the local connectivity is weak (for example when $t_{ij}=O(1)$) then $p_{ij}$ are of the same order, resulting in a  sampling scheme similar to uniform sampling. Table~\ref{tb: stats} shows the value of $\a$ and the clustering coefficient (see Section~\ref{sec:clustering} for the definition) for several well-known real-world networks. Note that while these networks are relatively sparse, the values of $\a$ are quite small, which suggests that real-world networks have strong local connectivity. 

The above sampling method requires access to the number of common neighbors $t_{ij}$ for all pairs of incident nodes. If $t_{ij}$ are readily available, which is the case for some social networks such as Facebook, then the computational complexity of this sampling method is linear in the total number of edges $|E|$. When $t_{ij}$ are not available, we can calculate them in parallel fashion or estimate them by neighbor sampling; see Section~\ref{sec: Tij calculation}   for more detail. The following theorem shows that the strong spectral property still holds if we use estimates of  $t_{ij}$ and increase the sample size by a factor depending on the accuracy of the estimates.  

\begin{theorem}[Sampling method using estimates of $t_{ij}$]\label{thm: uniform edge sampling}
Consider an undirected and connected network $G=(V,E)$ and let $\hat{t}_{ij}$ be nonnegative estimates of $t_{ij}$ such that
\begin{equation}\label{eq: estimation accuracy}
\hat{t}_{ij}+2 \le C(t_{ij}+2)
\end{equation}
for all edges $(i,j)\in E$ and some constant $C$. Let $\e\in(0,1)$ and denote
\begin{equation}\label{eq: alpha hat}
\hat{\a} = \frac{1}{n}\sum_{(i,j)\in E_G} \frac{2}{\hat{t}_{ij}+2}.
\end{equation}
Form a weighted graph $H$ by sampling $8C\hat{\a} n\log n/\e^2$ edges of $G$ as described in Theorem~\ref{thm: main theorem} but using $\hat{t}_{ij}$ instead of $t_{ij}$. Then $H$ satisfies the spectral property \eqref{eq: spectral property} with probability at least $1-1/n$.
\end{theorem}

%

The proof of Theorem~\ref{thm: uniform edge sampling} depends crucially on condition \eqref{eq: estimation accuracy}. It implies that $2/(t_{ij}+2)\le 2C/(\hat{t}_{ij}+2)$ and consequently the effective resistance of the edge $(i,j)$ is bounded by $2C/(\hat{t}_{ij}+2)$. This observation allows us to express the Laplacian of the sparsified network as a sum of independent matrices with spectral norms bounded by $C\hat{\alpha}n$ up to a scaling matrix factor. A standard matrix concentration result is then used to show the strong spectral property; see the proof in Appendix~\ref{subsec: proof of main theorems} for more detail. Note that  
Theorem~\ref{thm: main theorem} follows directly from 
Theorem~\ref{thm: uniform edge sampling} by setting $\hat{t}_{ij} = t_{ij}$ and $C=1$.

One may wonder how many edges must be sampled so that the uniform sampling (which samples edges with probabilities $p_{ij}=1/|E|$) satisfies the spectral property \eqref{eq: spectral property}. The uniform sampling is obtained by setting $\hat{t}_{ij} = t$ for all edges of $G$ in Theorem~\ref{thm: uniform edge sampling}. The constant $C$ can be taken to be
$$
C = \frac{t+2}{\min_{(i,j)\in E} t_{ij}+2} \quad \text{and} \quad \hat{\a} = \frac{2|E|}{n(T+2)}.
$$
Therefore by Theorem~\ref{thm: uniform edge sampling}, the uniform sampling satisfies \eqref{eq: spectral property} with high probability if the sample size is
\begin{equation}\label{eq: uniform sample size}
m = \frac{16\varepsilon^{-2}|E|\log n}{\min_{(i,j)\in E} t_{ij}+2}.
\end{equation}
If $\min_{(i,j)\in E} t_{ij}$ is of order $|E|/n$, i.e. numbers of common neighbors are at least a constant fraction of the average degree, then $m=O(n\log n)$. 

In general, the sample size requirement \eqref{eq: uniform sample size} is optimal up to the logarithm and constant factors. That is, \eqref{eq: spectral property} needs not hold if $m=o(|E|/(\min_{(i,j)\in E} t_{ij}+2))$. To see this, consider an example of a graph $G$ consisting of a complete graph of $n-1$ nodes and a node $i$ of degree $k=o(n)$. Then $\min_{(i,j)\in E} t_{ij} = k-1$. If $m=o(|E|/(k+1))$ and edges of $G$ are sampled uniformly then the probability that no edges incident to $i$ is selected is
$$
\left(1-\frac{k}{|E|}\right)^m = \left(1-\frac{k}{|E|}\right)^{\frac{|E|}{k}\cdot \frac{mk}{|E|}} \approx \exp\left(-\frac{mk}{|E|}\right) \approx 1.
$$  
That is, with probability close to one, $i$ is an isolated node in the (weighted) sparsified graph. Therefore the degree of $i$ cannot be approximately preserved, which implies that the spectral property \eqref{eq: spectral property} does not hold.

For graphs with small value of $\min_{(i,j)\in E} t_{ij}$, the sample size $m$ in \eqref{eq: uniform sample size} for the uniform sampling scheme may get as large as the total number of edges $|E|$, which defies the purpose of graph sparsification.
By choosing $\hat{t}_{ij} = t$ only if $t_{ij}>t$ for some large threshold $t$ and $\hat{t}_{ij} = t_{ij}$ if $t_{ij}\le t$, we obtain a hybrid of uniform sampling and sampling using common neighbors that may require smaller sample size than   \eqref{eq: uniform sample size}. Indeed, in  Theorem~\ref{thm: uniform edge sampling}, we can choose $C=1$ and
\begin{eqnarray*}
\hat{\alpha} = \frac{1}{n} \sum_{(i,j)\in E: t_{ij}\le t} \frac{2}{t_{ij}+2} + \frac{1}{n} \sum_{(i,j)\in E: t_{ij}> t} \frac{2}{t+2}
\le \alpha + \frac{2|E|}{n(t+2)}.
\end{eqnarray*}
The required sample size for the hybrid method to obtain the spectral property \eqref{eq: spectral property} with high probability is then
\begin{eqnarray*}
8\varepsilon^{-2} C\hat{\a} n\log n \le 8\varepsilon^{-2}\left(\alpha+\frac{2|E|}{n(t+2)}\right)n\log n 
= 8\varepsilon^{-2} \alpha n\log n + \frac{16|E|\log n}{\varepsilon^2(t+2)},
\end{eqnarray*} 
which is smaller than the sample size in \eqref{eq: uniform sample size} if $\alpha n = o(|E|/(\min_{(i,j)\in E} t_{ij}+2))$ and $\min_{(i,j)\in E} t_{ij} = o(t)$. This hybrid method illustrates an interesting application of Theorem~\ref{thm: uniform edge sampling} and may also be useful when uniform sampling is desirable, for example for controlling the variance of the sparsified graph.

\section{Bounding parameter $\a$}\label{sec: bound on alpha}
In this section we draw the connection between the parameter $\alpha$ and two of the most common network statistics, the clustering coefficient \cite{Watts&Strogatz1998} and the network curvature \cite{Bauer&Urgen&Liu2012}. 

\subsection{Lower bound}\label{sec:clustering}

It has been observed that for many real-world networks, the neighborhoods of most of the nodes are surprisingly dense \cite{Watts&Strogatz1998,Uganderetal2011}. This reflects the belief that incident nodes exhibit the transitivity property: if $i$ and $j$ are connected and $j$ and $k$ are connected then it is likely that $i$ and $k$ are also connected.  
One way to measure the transitivity is via the clustering coefficient \cite{Watts&Strogatz1998}. 
For an undirected network $G = (V,E)$, the local clustering coefficient of node $i\in V$ is defined as the ratio between the number of triangles containing $i$ and the maximum number of triangles it can form with incident nodes
$$
c_i = \frac{|\{ (j,k)\in E: (i,j)\in E, (i,k)\in E \}|}{d_i(d_i-1)/2}.
$$   
The clustering coefficient of a network $G$ is the average of all local clustering coefficients
$$
c = \frac{1}{n} \sum_{i=1}^n c_i.
$$

The following theorem provides a lower bound on parameter $\a$ in terms of the clustering coefficient $c$ and node degrees $d_i$. It shows that if node degrees are large and $c$ is small then
$\alpha$ is large and therefore a large sample size is required for our method to obtain the spectral property \eqref{eq: spectral property}. On the other hand, Table~\ref{tb: stats} suggests that $\alpha$ is small when $c$ is large. Since the clustering coefficient is a very popular statistic and has been calculated for most of available real-world networks, the connection to the clustering coefficient provides valuable information about $\alpha$ before the sampling procedure is performed.

\begin{theorem}[Lower bound on $\a$]\label{lem: lower bound resistance sum}
For any undirected and connected network we have
\begin{equation}\label{eq: lower bound rs}
\a \ge\frac{1}{4c+ \frac{2}{n}\sum_{i=1}^n\frac{1}{d_i}}.
\end{equation}
\end{theorem}

According to Theorem~\ref{lem: lower bound resistance sum}, if $c\gtrsim 1/n\sum_{i\in V} 1/d_i$ then $\a$ satisfies $\a \gtrsim 1/c$ (for two sequences $a_n$ and $b_n$, we write $a_n\gtrsim b_n$ if $a_n\ge C b_n$ for some constant $C$ and sufficiently large $n$). The geometric random graph model described in Corollary~\ref{cor: geometric graph} below provides examples for which the upper bound $\a \lesssim 1/c$ also holds; for more detail, see the discussion following Corollary~\ref{cor: geometric graph}. In addition, Table~\ref{tb: stats} gives examples of real networks for which $\alpha$ and $1/c$ are of similar order. 

There exist graphs for which the two sides of \eqref{eq: lower bound rs} 
are of different orders. For example, let $G = K_n\cup E_n$ be the union of a complete graph $K_n$ of size $n$ and an Erd\H{o}s-R\'{e}nyi random graph $E_n$, also of size $n$, for which edges are formed independently between each pair of nodes with probability $d/n$; we connect $K_n$ and $E_n$ by an arbitrary edge to make $G$ a connected graph. 
If $\sqrt{n}\lesssim d = o(n)$ then 
an easy calculation shows that with high probability, the left hand-side of \eqref{eq: lower bound rs} is of order $n/d$ while the right hand-side is bounded.

\subsection{Upper bound}\label{sec: Ricci curvature}

Another measure of network transitivity that has recently attracted much attention is the network curvature \cite{Bauer&Urgen&Liu2012,Jost&Liu2014,Lin&Lu&Yau2014,Bhattacharya&Mukherjee2015}. In this section we recall the definition of network curvature and show that if it is bounded from below by some constant $\kappa_0>0$ then $\a\le 1/\kappa_0$.

Denote by $d(i,j)$ the length of a shortest path connecting nodes $i$ and $j$. For each node $i$, consider a uniform measure $m_i$ with support being the set $N_i$ of neighbors of $i$:
$$
m_i(k) = 
\left\{
  \begin{array}{ll}
    \frac{1}{d_i}, & \hbox{if } k\in N_i\\
    0, & \hbox{otherwise.}
  \end{array}
\right.
$$
The optimal transportation distance between $m_i$ and $m_j$ is defined as follows:
$$
W_1(m_i,m_j) = \inf_{\xi\in\Pi (m_i,m_j)}\sum_{(k,k')\in V\times V} d(k,k')\xi(k,k'),
$$ 
where $\Pi(m_i,m_j)$ is the set of all probability measures on $V\times V$ with marginals $m_i$ and $m_j$. Intuitively, $\xi(k,k')$ represents the mass transported from $k$ to $k'$, and $W_1(m_i,m_j)$ is the optimal cost for moving a unit mass distributed evenly among neighbors of $i$ to neighbors of $j$. 
With this notion of distance between probability measures on $G$, the curvature $\kappa$ defined for every pair of nodes $i$ and $j$ is
$$\kappa(i,j) = 1 - \frac{W_1(m_i,m_j)}{d(i,j)}.$$ 

To illustrate, in Figure~\ref{fig: karate} we show  the Zachary's karate club network \cite{Zachary1977} together with the information of its curvatures for incident nodes. In particular, edges with negative curvatures are in blue, positive curvatures -- in red and zero curvatures -- in black; widths of edges are proportional to magnitudes of curvatures. 
\begin{figure}[!ht]
  \centering
  \includegraphics[trim=80 30 50 40,clip,width=0.4\textwidth]{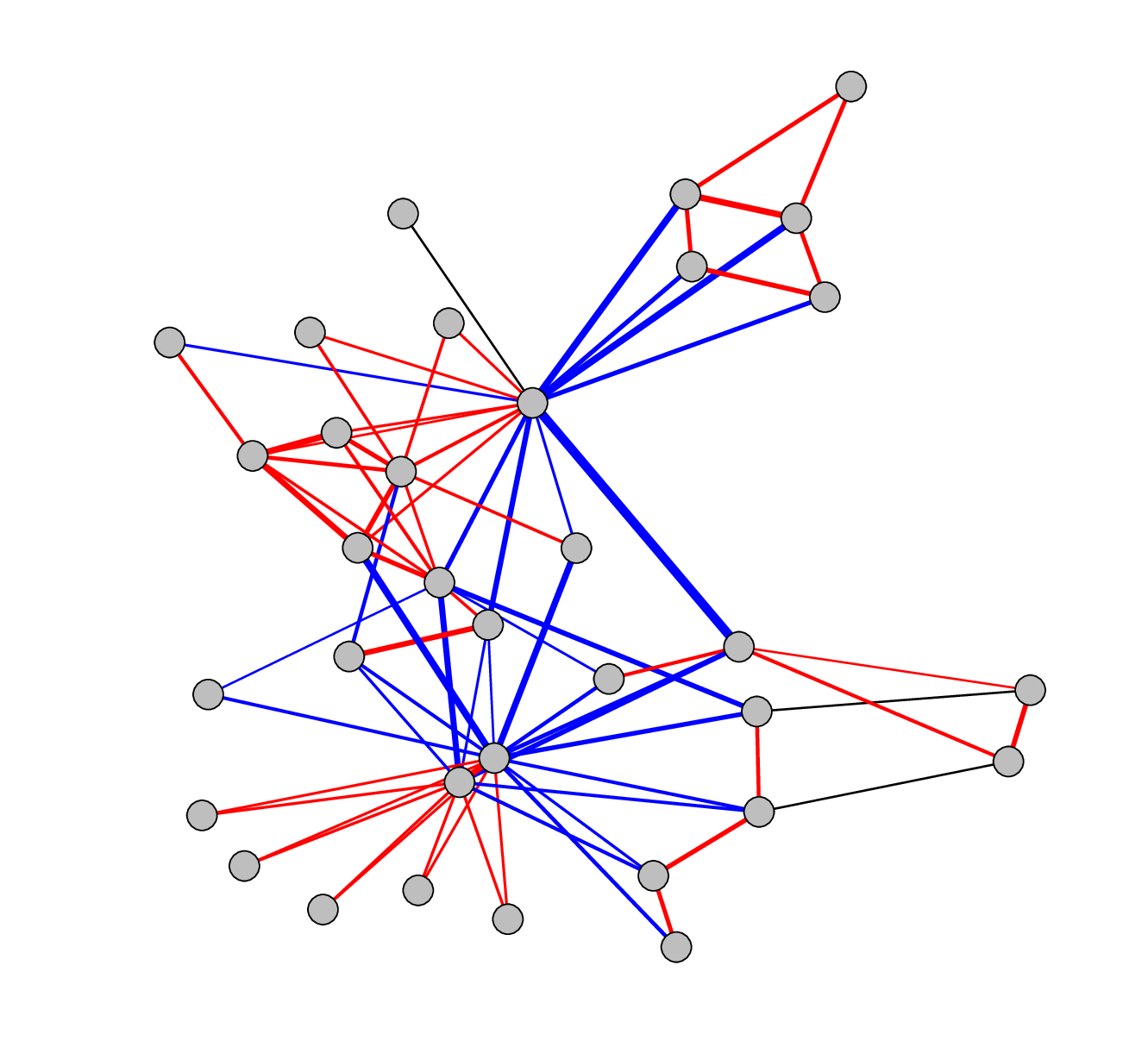}\\ 
  \caption{Zachary's karate club network \cite{Zachary1977}. Edges with negative curvatures are in blue, positive curvatures -- in red and zero curvatures -- in black; widths of the edges are proportional to the magnitudes of their curvatures.}
  \label{fig: karate}
\end{figure}

We say $\kappa\ge \kappa_0$ for some constant $\kappa_0$ if $\kappa(i,j)\ge \kappa_0$ for every pair of nodes $i$ and $j$. If $\kappa\ge \kappa_0$ then by definition $W_1(m_i,m_j) \le (1-\kappa_0) d(i,j)$ for all $(i,j)\in V\times V$. 
In particular, if $i$ and $j$ are connected then $W_1(m_i,m_j) \le 1-\kappa_0$. Note that if $G$ is a connected graph then the inverse is also true: If $W_1(m_i,m_j) \le 1-\kappa_0$ holds for all pairs of connected  nodes $i$ and $j$ then  $W_1(m_i,m_j) \le (1-\kappa_0) d(i,j)$ holds for all $(i,j)\in V\times V$ by a triangle inequality. 


This notion of curvature is closely related to the simple random walk on a network. If $\kappa\ge \kappa_0>0$ then \cite{Ollivier2009} shows that the spectral gap between the two largest eigenvalues of the transition matrix $D^{-1}A$ is bounded from below by $\kappa_0$ (see also \cite{Bauer&Urgen&Liu2012} for an improvement of the bound). Thus, the curvature of a graph controls how fast a simple random walk on that network mixes.  

The following theorem provides a simple upper bound on $\alpha$ in terms of the curvature.

\begin{theorem}[Upper bound on $\a$]\label{lem: upper bound er}
Let $G$ be an undirected and connected network. Assume there exist constants $\kappa_0>0$ and $C>0$ such that
$\kappa(i,j)\ge \kappa_0$ for all but at most $Cn$ edges of $G$. Then $\alpha \le 1/\kappa_0 + C$. 
\end{theorem}

\section{Random networks}\label{sec: random graphs}

In this section we provide a high probability bound on $\alpha$ for inhomogeneous Erd\H{o}s-R\'{e}nyi random networks \cite{Bollobas2007} satisfying some mild conditions. As a corollary, we give an example of a geometric random network model for which $\alpha$ is bounded. 

\begin{theorem}[Inhomogeneous Erd\H{o}s-R\'{e}nyi networks]\label{thm: alpha bound random graph}
Consider a random graph with adjacency matrix $A$ such that the upper diagonal elements of $A$ are independent Bernoulli random variables. Denote $P=\E A$ and $\Delta = \max_{i}\sum_{j=1}^nP_{ij}$. Assume that there exists a sufficiently large constant $C$ such that
\begin{equation}\label{eq: max condition}
\Delta\ge C\log n \qquad \text{and} \qquad \Delta\cdot \left[1+\max_{i,j}\left(P^2\right)_{ij}\right] \le \frac{1}{C\log n} \sum_{i<j}P_{ij}.
\end{equation}
Then with probability at least $1-1/n$, \begin{equation}\label{eq: alpha bound random graph}
\alpha \le \frac{1}{n} \sum_{i<j} \frac{10 P_{ij}}{\E t_{ij}+2}.
\end{equation}
In particular, if the right-hand side of \eqref{eq: alpha bound random graph} is bounded then $\alpha$ is also bounded. 
\end{theorem}

The first inequality of \eqref{eq: max condition} requires that the maximal expected node degree grow at least as $\log n$; this is a natural condition because otherwise the network would already be sparse and no sampling would be  needed. The second inequality of \eqref{eq: max condition} is a condition on the maximal expected degree $\Delta$, the maximal expected number of common neighbors $\max_{ij}(P^2)_{ij}$ and the expected number of edges $1/2\sum_{i<j}P_{ij}$. If all nodes in the graph are of similar expected degree then $\sum_{i<j}P_{ij}\approx n\Delta$. Therefore, using the crude bound $(P^2)_{ij} \le \Delta$, the second inequality of \eqref{eq: max condition} is satisfied if $\Delta$ is at most of order $n/\log n$. 

By Jensen's inequality and the independence between $A_{ij}$ and $t_{ij}$, we have
$$
\E \a = \frac{1}{n}\sum_{i<j} \E \frac{2A_{ij}}{t_{ij}+2} = \frac{1}{n}\sum_{i<j} \E \frac{2P_{ij}}{t_{ij}+2} \ge \frac{1}{n}\sum_{i<j} \frac{2P_{ij}}{\E t_{ij}+2}.
$$    
It then follows from \eqref{eq: alpha bound random graph} that $\alpha\le 5\E \alpha$ with probability at least $1-1/n$, while naively applying Markov's inequality gives the same inequality with probability at least $4/5$. 
Note, however, that the upper bound of \eqref{eq: alpha bound random graph} is much easier to calculate than $\E \alpha$. 

As a direct consequence of Theorem~\ref{thm: alpha bound random graph}, the following corollary shows that $\alpha$ is bounded for a simple geometric random network model.

\begin{corollary}[Geometric random networks]\label{cor: geometric graph}
Let $X=\{x_1,x_2,...,x_n\}\subseteq K$ be a set of points in a bounded set $K\subseteq \R^d$ with unit volume. For each pair of nodes $(i,j)$, let
$$
P_{ij} = 
\begin{cases}
\delta, & \text{if } \|x_i-x_j\|\le r_n,\\
0, & \text{otherwise}.
\end{cases}
$$
Denote by $n_i$ the number of points of $X$ of distance at most $R_n$ from $x_i$; similarly, denote by $n_{ij}$ the number of points of $X$ of distance at most $R_n$ from $x_i$ and $x_j$. Assume that there exists a constant $C = C(d,K)$ depending only on $d$ and $K$ such that for every node $i$ and every node $j$ with $\|x_j-x_i\|\le r_n$,
\begin{eqnarray}\label{eq: geometric graph condition}
C^{-1}
n r_n^d \le n_i,n_{ij}\le C n r_n^d \quad \text{and} \quad C^2\delta^{-1}\log n \le nr_n^d \le \frac{n}{4C^3\delta^2\log n}.  
\end{eqnarray}
Then $\alpha \le 5C^2/\delta$ with probability at least $1-1/n$. 
\end{corollary}

The first condition of \eqref{eq: geometric graph condition} holds with high probability if $x_1,...,x_n$ are independently drawn from a uniform distribution on $K$. Indeed, for every node $i$, $n_i/n$ is approximately the volume of the ball of radius $r_n$ and center $x_i$, which is proportional to $r_n^d$ up to a constant depending on $d$ and $K$; a similar argument holds for $n_{ij}$ with $\|x_j-x_i\|\le r_n$. The second condition of \eqref{eq: geometric graph condition} requires that the average degree of the graph be roughly between $\log n$ and $n/\log n$. If these conditions are  satisfied then $\alpha$ is bounded with high probability.

Theorem~\ref{lem: lower bound resistance sum} shows that $\a \gtrsim 1/c$ if the clustering coefficient $c$ is at least of the same order as $1/n\sum_{i\in V} 1/d_i$. Corollary~\ref{cor: geometric graph} provides examples for which the reverse bound  also holds. Indeed, since $\alpha \lesssim 1/\delta$ with high probability by Corollary~\ref{cor: geometric graph}, the bound $\alpha\lesssim 1/c$ holds if $c\gtrsim \delta$ with high probability. To see why that is the case, for every node $i$ let $N_i$ be the set of all neighbors of $i$. Then conditioned on $N_i$, the probability that two neighbors of $i$ are connected is at most $\delta$. Therefore the number of triangles containing $i$ is stochastically bounded by a sum of $d_i(d_i-1)/2$ independent Bernoulli random variables with success probability $\delta$. Using a standard concentration result and union bounds, we see that the local clustering coefficients satisfy $c_i \lesssim \delta$ for all $i$ with high probability. Since $c$ is the average of $c_i$, this implies $c \gtrsim \delta$ with high probability.

\section{Sampling hypergraphs}\label{sec: hypergraphs}


Strong local connectivity of a network is often caused by the fact that each node belongs to one or several tightly connected small groups \cite{Gupta&Roughgarden&Seshadhri2014}.
To simplify the analysis, we assume that within each small group, all nodes are connected. Under this assumption, a network can be modeled by a hypergraph $\mathcal{G}=(V,\mathcal{E})$ which consists of a set of nodes $V$ and a set of hyperedges $\mathcal{E}$ where each hyperedge is a subset of $V$. In this section we derive  a condition under which a hypergraph can be sampled and reduced to a weighted network. 
This provides another example for which our sampling scheme works well and may be useful in practice as a computational acceleration technique. 

The Laplacian previously defined for networks can be naturally extended to hypergraphs through clique expansion \cite{Rodriguez2002,Agarwal&Branson&Belongie2006}.
For a hypergraph $\mathcal{G}=(V,\mathcal{E})$, the evaluation of the Laplacian $L_\mathcal{G}$ at a vector $x$ is defined by
$$
L_{\mathcal{G}}(x) = \sum_{e\in\mathcal{E}}\sum_{i,j\in e}  (x_i-x_j)^2.
$$
If we view $x$ as a function from $V$ to $\R$ then $L_{\mathcal{G}}(x)$ measures the smoothness of $x$ and it occurs naturally in many problems of estimating smooth functions 
\cite{Smola&Kondor2003,Belkin&Matveeva&Niyogi2004,Huang&Ma&Li&Zhang2011,Kirichenko&vanZanten2017,Li&Levina&Zhu2016,
Le&Li.netreg2020}.

Let $G=(V,E,W)$ be a weighted network such that $(i,j)\in E$ if and only if both $i$ and $j$ belong to at least one hyperedge of $\mathcal{G}$, and $W$ denotes the weight matrix with entries $W_{ij}$ being the number of hyperedges that both $i$ and $j$ belong to. It is easy to see that $L_{\mathcal{G}}(x) = x^\tran L_{G}x$ for every $x$, where $L_G$ is the Laplacian of the weighted network $G$ defined by
$$x^\tran L_G x = \sum_{(i,j)\in E} W_{ij}(x_i-x_j)^2.$$ 
Thus, if we are mainly interested in the smoothness of functions determined by $\mathcal{G}$ then we can replace $\mathcal{G}$ with $G$. We  call $G$ the weighted network induced by $\mathcal{G}$.

To form a sparsifier $H = (V,E_H,W_H)$ of $G$, we sample with replacement $m$ edges of $G$ with probability 
\begin{equation*}\label{eq: pij weighted graph}
\mathcal{P}_{ij} = \frac{\tilde{t}_{ij}^{-1}}{\sum_{(i,j)\in E} \tilde{t}_{ij}^{-1}}, \quad \text{where} \quad
\tilde{t}_{ij} = \sum_{e\in\mathcal{E}:\{i,j\}\in e} |e|.
\end{equation*}
If an edge $(i,j)\in E$ is selected $k\ge 1$ times then we add $(i,j)$ to $E_H$ and assign the weight $k(m\mathcal{P}_{ij})^{-1}$ to it. Similar to the parameter $\alpha$ for unweighted graphs, let
\begin{equation*}\label{eq: alpha tilde}
\tilde{\alpha} = \frac{1}{n} \sum_{(i,j)\in E} \tilde{t}_{ij}^{-1}.
\end{equation*}

\begin{lemma}[Upper bound on $\tilde{\a}$]\label{lem: alpha tilde bound}
Let $\mathcal{G}=(V,\mathcal{E})$ be a hypergraph. If each node of $\mathcal{G}$ belongs to at most $d$ hyperedges  then $\tilde{\alpha}\le d/2$.
\end{lemma}

Without further assumptions on $\mathcal{G}$, the bound $\tilde{\alpha}\le d/2$ is nearly optimal. To see this, consider the following example. Let $k>0$ be an integer, $n = k^2$ and $V_1,...,V_k$ be a partition of $V=\{1,...,n\}$ such that each $V_i$ contains exactly $k$ elements $V_{i1},...,V_{ik}$. 
For each $1\le i\le k$, let $\sigma_i$ be a permutation of $\{1,2,...,k\}$ given by $\sigma_i(j) = i + j$ (mode $k$). Define the set of hyperedges of $\mathcal{G}$ as a collection of subsets of the form 
$$\left\{V_{1j},V_{2\sigma_i(j)},...,V_{k\sigma_i^{k-1}(j)}\right\}, \quad 1\le i,j\le k.$$
It is easy to see that every node of $\mathcal{G}$ is contained in exactly $d = k$ hyperedges and every pair of nodes of $\mathcal{G}$ is contained in at most one hyperedge. A simple calculation shows that $\tilde{\alpha} = (d-1)/2$.

The following theorem shows that the sparsified network obtained from a hypergraph satisfies the strong spectral property. 

\begin{theorem}[Sampling hypergraphs]\label{lem: alpha bound hypergraph}
Let $\mathcal{G}=(V,\mathcal{E})$ be a hypergraph and $G=(V,E,W)$ be the weighted network induced by $\mathcal{G}$. Let $\varepsilon\in (0,1)$ and assume that each node of $\mathcal{G}$ belongs to at most $d$ hyperedges of $\mathcal{G}$.
Form a weighted graph $H$ by sampling $4dn\log n/\varepsilon^2$ edges of $G$ as described above.
Then $H$ satisfies the strong spectral property \eqref{eq: spectral property} with probability at least $1-1/n$.
\end{theorem}

\section{Calculating the number of common neighbors}\label{sec: Tij calculation}  
In this section we discuss the problem of calculating $t_{ij}$ (either exactly or approximately), especially when the network is too large to be stored in a single computer. Once $t_{ij}$ are all computed, the sampling method can be performed easily by sampling edges according to $t_{ij}$ and aggregating over all sampled edges.   

\subsection{Exact calculation}\label{subsec: exact parallel comp}
The number of common neighbors $t_{ij}$ can be  calculated efficiently by using an MPI-based distributed memory parallel algorithm in \cite{Arifuzzaman&Khan&Marathe2019}, with very little modification.  
The algorithm first carefully partitions the graph into smaller overlapping subgraphs and stores them separately in local machines. A sequential algorithm then finds all triangles in every subgraph and counts the number of common neighbors for every connected pair of nodes in that subgraph. Since an edge of the original graph may belong to different overlapping subgraphs, the counts from all local machines are then aggregated before the final result is output. The authors of  \cite{Arifuzzaman&Khan&Marathe2019} show that their algorithm scales almost linearly in the number of local machines and can handle very large graphs with billions of edges.     

\subsection{Estimation}\label{sec:approx num comm nbrs}
Depending on the strength of the local connectivity of a graph, the computational complexity of our sampling method can be further improved by approximating $t_{ij}$ instead of calculating them exactly. In this section we describe a simple method for estimating $t_{ij}$ by sampling the neighbors of either $i$ or $j$. A similar idea has been used in minwise hashing, a popular technique for efficiently estimating the Jaccard similarity between two sets  \cite{Broder1997,Broderet.al.1997,Becchettiet.al.2008,Satuluri&Parthasarathy&Ruan2011, Shrivastava&Li2014,Shrivastava&Li2015}. Although the method described here is sequential in nature, we can easily turn it into a parallel algorithm by adapting the method of \cite{Arifuzzaman&Khan&Marathe2019} discussed in Section~\ref{subsec: exact parallel comp}.   

For each pair of connected nodes $(i,j)$, denote by $N_i$ the set of neighbors of $i$ and by $N_j$ the set of neighbors of $j$. The asymmetric Jaccard similarity between $N_i$ and $N_j$ is defined by
$$
\theta_{ij} = \frac{|N_i\cap N_j|}{\min\{|N_i|,|N_j|\}}=\frac{t_{ij}}{\min\{d_i,d_j\}}.
$$
Fix a sample size $k\ge 1$ and assume that $d_i\le d_j$. If $k\ge d_i$ then simply counting the number of neighbors of $i$ that are also neighbors of $j$ gives us exactly $t_{ij}$. If $k<d_i$,
let $Z_1,...,Z_k$ be $k$ random neighbors of $i$ drawn independently and uniformly from $N_i$. We estimate $t_{ij}$ by
\begin{equation*}\label{eq: T hat}
\hat{t}_{ij} = \frac{d_i}{k}\sum_{\ell =1}^k \onevector(Z_\ell\in N_j),
\end{equation*}
where $\onevector(Z_\ell\in N_j)$ is the indicator of the event $Z_\ell\in N_j$. It is easy to see that $\hat{t}_{ij}$ is an unbiased estimate of $t_{ij}$ because $\onevector(Z_\ell\in N_j)$, $1\le \ell\le k$, are Bernoulli random variables with success probability $\theta_{ij}$. 

In order to apply Theorem~\ref{thm: uniform edge sampling}, condition \eqref{eq: estimation accuracy} must be satisfied for all edges $(i,j)\in E_G$. For those edges such that $\theta_{ij}\ge \varepsilon$ for some constant $\varepsilon$, we will show that \eqref{eq: estimation accuracy} holds with high probability if $k$ is chosen to be of order $\log n$. For those edges with $\theta_{ij}=o(1)$, $\hat{t}_{ij}$ may not satisfy \eqref{eq: estimation accuracy} if $k=O(\log n)$, therefore we calculate $t_{ij}$ directly. We use $\frac{1}{k}\sum_{\ell =1}^k \onevector(Z_\ell\in N_j)$ to check whether $\theta_{ij}$ is sufficiently large.
The estimation procedure is summarized in the following algorithm.

\begin{algo}(Estimating number of common neighbors)\label{alg: num comm nbrs}
Choose $\varepsilon\in(0,1)$ and $k\ge 1$. For each edge $(i,j$), let $i$ be the node with $d_i\le d_j$. If $d_i\le k$, calculate $t_{ij}$ directly by counting the number of elements of $N_i\cap N_j$. If $d_i>k$, sample $k$ neighbors $Z_1,...,Z_k$ of $i$ independently and uniformly from $N_i$, calculate $\hat{\theta}_{ij} = \frac{1}{k}\sum_{\ell =1}^k \onevector(Z_\ell\in N_j)$ and proceed as follows:
\begin{itemize}
\item If $\hat{\theta}_{ij}<\varepsilon$, calculate $t_{ij}$ directly by counting the number of elements of $N_i\cap N_j$. 
\item If $\hat{\theta}_{ij}\ge\varepsilon$, estimate $t_{ij}$ by $\hat{t}_{ij} = d_i\hat{\theta}_{ij}$.
\end{itemize}
\end{algo}

The following theorem provides the spectral guarantee \eqref{eq: spectral property} for the sparsified network when Algorithm~\ref{alg: num comm nbrs} is used.

\begin{theorem}[Sampling method using $\hat{t}_{ij}$]\label{thm: approx num comm nbrs}
Let $\e\in(0,1)$, $k=100\log n/\varepsilon$ and estimate the numbers of common neighbors using Algorithm~\ref{alg: num comm nbrs}. Form a weighted graph $H$ by sampling $24{\a} n\log n/\e^2$ edges of $G$ according to Theorem~\ref{thm: uniform edge sampling}. Then with probability at least $1-1/n$, $H$ satisfies the spectral property \eqref{eq: spectral property} and the computational complexity of estimating the number of common neighbors is at most 
\begin{eqnarray}\label{eq: comp complexity}
 \sum_{(i,j)\in E_G:\theta_{ij}\le \varepsilon/2}\min\{d_i,d_j\} + 100\varepsilon^{-1}|E_G|\log n.
\end{eqnarray}
\end{theorem} 

The complexity of estimating the number of common neighbors in Theorem~\ref{thm: approx num comm nbrs} is nearly linear in the number of edges $|E_G|$ (up to the $\log n$ factor) and depends on the local structure of the network  via the first term of \eqref{eq: comp complexity}. If the local connectivity of $G$ is sufficiently strong so that $\theta_{ij}\ge \varepsilon/2$ for all edges then the first term disappears. However, for networks with very weak local connectivity, such as Erd\H{o}s-R\'{e}nyi random networks, the first term of \eqref{eq: comp complexity} may be as large as $d\cdot|E_G|$, where $d$ is the average node degree. In that case, it is not clear if the computational complexity of the (sequential) estimation algorithm can be substantially improved; we leave this problem for future study. 

%

Section~\ref{sec: estimating network stats} shows the performance of the proposed sampling method using both exact and estimated numbers of common neighbors.

\section{Numerical study}\label{eq: simulation}
\subsection{Parameter $\alpha$} \label{subsec:alpha}
According to Theorem~\ref{thm: main theorem}, $\alpha$ directly controls the accuracy of our sampling method.
In this section we show that $\a$ is relatively small for many simulated and real-world networks. 

\subsubsection{Simulated networks}
We consider geometric inhomogeneous random networks (GIRG) generated from a latent space model which has been shown to exhibit several properties of real-world networks such as the strong transitivity and the power law distribution of node degrees \cite{Bringmann&Keusch&Lengler2015}. 
To model the power law, each node $i$ is assigned a weight $w_i = \d\cdot(n/i)^{1/(\b-1)}$, where $\d>0$ and $2\le \b\le 3$ are parameters. The latent positions $x_i$ are drawn uniformly at random from an $r$-dimensional torus $\mathbb{T}^r = \R^r/\mathbb{Z}^r$ equipped with the distance 
$$
d(u,v) = \max_{1\le k\le r} \min\{|u_k-v_k|,1-|u_k-v_k|\}.
$$  
For a parameter $\gamma>1$ and $w = \sum_{i=1}^n w_i$, an edge is independently drawn between each pair of nodes $i,j$ with probability 
$$
p_{ij} = \min\left\{ \frac{1}{\|x_i-x_j\|^{\gamma r}}\Big(\frac{w_iw_j}{w}\Big)^\gamma,1 \right\}.
$$
We report in Table~\ref{table: girg statistics} the value of $\alpha$, the clustering coefficient and the average node degree (averaged over 20 replications) of networks generated from GIRG with parameter $\delta=\gamma=2$, $r=3$, $\beta=2.5$ and $n=100,500,1000,2000,4000$. Table~\ref{table: girg statistics} shows that while the network size and average node degree increase, the value of $\alpha$ increases mildly from 1.84 to 2.65 and the clustering coefficient decreases from 0.59 to 0.52.  
\begin{table}
\renewcommand{\arraystretch}{1.4}
\begin{tabular}{|l|c|c|c|c|c|}
\hline
Network size & 100 & 500 &1000 &2000 &4000\\ \hline
Parameter $\alpha$ & 1.84& 2.34&2.49 &2.59 &2.65  \\ \hline 
Clustering coefficient &0.59& 0.53& 0.52& 0.52& 0.52 \\ \hline
Average degree& 27.64& 36.42& 38.89& 40.89& 42.57 \\ \hline    
\end{tabular}
\caption{Statistics of networks generated from the GIRG with $\delta=\gamma=2$, $r=3$ and $\beta=2.5$, averaged over 20 replications.}
\label{table: girg statistics}
 \end{table}

\subsubsection{Real-world networks}  
We further report in Table~\ref{tb: stats} the value of $\a$, the clustering coefficient and the average degree of several well-known real-world networks: karate club network  \cite{Zachary1977}, dolphins network \cite{Lusseau2003}, political blogs network \cite{Adamic05}, Facebook ego network \cite{McAuley&Leskovec2012}, Astrophysics collaboration network \cite{Leskovec&Kleinberg&Faloutsos2007},  Enron email network \cite{Klimt&Yang2004}, Twitter Social circles \cite{Yang&Leskovec2012}, Google+ social circles \cite{Yang&Leskovec2012}, DBLP collaboration network  \cite{Yang&Leskovec2012} and LiveJournal social network \cite{Yang&Leskovec2012}. Again, we observe that while the network size and average node degree vary, $\a$ and the clustering coefficient are very stable, with the value of $\a$ between 1.40 and 4.38 and the value of the clustering coefficient between 0.26 and 0.63, respectively.  
 
\begin{table}[!ht]
\renewcommand{\arraystretch}{1.2}
\begin{center}
\begin{tabular}{|l|c|c|c|c|}\hline
\makecell{Data}& $n$ & Average degree& $c$ & $\alpha$ \\
\hline
Karate club & $34$ & $4.59$ & $0.57$ & $1.46$ \\
\hline
Dolphins & $62$& $5.13$&$0.26$&$1.65$\\
\hline
Political blogs & $1490$ &$22.44$ & $0.32$& $3.04$ \\
\hline
Facebook ego  &$4039$ & $43.69$&$0.61$&$1.96$ \\
\hline
Astrophysics collaboration & $18771$ & $21.10$ & $0.63$ & $1.96$ \\
\hline
Enron email &$36692$& $10.02$ & $0.50$ & $1.59$\\
\hline
Twitter &$81306$& $33.02$ & $0.57$ & $2.33$\\
\hline
Google+ &$107614$& $227.45$ & $0.49$ & $4.38$\\
\hline
DBLP collaboration &$317080$& $6.62$ & $0.63$ & $1.40$\\
\hline
LiveJournal &$3997962$& $17.35$ & $0.28$ & $3.65$\\
\hline
\end{tabular}
\end{center}
\caption{Statistics of some real-world networks.}
\label{tb: stats}
\end{table}
  

\subsection{Accuracy of network sampling} \label{sec: estimating network stats}

In this section, we compare the performance of our sampling method that uses the number of common neighbors (CN), the uniform sampling (UN) and a version of CN that uses $\hat{t}_{ij}$ defined in \eqref{eq: T hat} to approximate the number of common neighbors (CNA). We use these methods to sparsify networks, both simulated and real-world, and then measure the accuracy of the resulting sparsified networks by comparing their Laplacians with those of the original networks. Motivated naturally by the strong spectral property \eqref{eq: spectral property}, for a connected network $G$ and its sparsification $H$, we report the following relative error
\begin{eqnarray}\label{eq:rel error}
\text{Relative error} = \max_{x: L_Gx\neq 0}\frac{x^\tran(L_H-L_G)x}{x^\tran L_G x} = \|L_G^{-1/2}(L_H-L_G)L_G^{-1/2}\|,
\end{eqnarray}  
where $L_G^{-1/2}$ is the square root of the Moore–Penrose pseudo-inverse $L_G^{-1}$ of $L_G$. This error reflects the accuracy of $H$ in preserving the structure of $G$. Since calculating the relative error involves inverting the Laplacian, we consider in this section only networks of relatively small sizes.    


\subsubsection{Simulated networks}\label{sec:latant space sbm}
We first analyze the performance of CN, CNA and UN on random networks generated from the latent space stochastic block model \cite{Ng.et.al.latentspace.SBM.2018}, which has been shown to capture important characteristics of real-world networks. Specifically, we assume that nodes are partitioned into three disjoint groups or communities, and conditioning on the community labels, subsetworks corresponding to the communities follow the GIRG model defined in Section~\ref{subsec:alpha} with $\gamma=\delta=1$, $\beta =3$ and $r=10$. Edges between nodes in different communities are independently drawn with the same probability adjusted so that the ratio of the expected numbers of edges between communities and within communities equals $\rho\in\{0.01,0.1\}$, which measures the strength of the community structure. 

For each $\rho\in\{0.01,0.1\}$, we consider three settings corresponding to different community size ratios $(1/20,9/20,1/2)$, $(1/10,2/5,1/2)$ and $(1/3,1/3,1/3)$. In the first two settings, network communities are of very different sizes, while in the last setting, all communities are of the same size $n/3$.     
The networks generated in these settings are relatively dense for the sampling purpose, with expected degree ranging approximately from 300 to 500. 
We vary the sample size $m$ by setting $m=\tau n$, where $\tau$ is the sample size factor taking values from 10 to 210. To approximate the number of common neighbors for CNA, we sample $k=50$ neighbors using Algorithm~\ref{alg: num comm nbrs}.
 
Figure~\ref{fig:relative error random network} shows the relative error averaged over 10 repetitions of CN, CNA and UN in three settings and different values of $\rho$.
We observe that as the sample size $m$ increases, all methods perform better, with CN slightly better than CNA, and both methods are more accurate than UN when the communities are of different sizes and especially when $\rho=0.01$. This is because when $\rho$ is small and one community is of much smaller size than the others, UN focuses on sampling edges within large communities, mostly ignoring edges between communities and within the smallest community. In contrast, CN and CNA sample more edges within the smallest community and between communities because they have fewer common neighbors, resulting in better estimates of the Laplacian. However, when all communities are of the same size, UN tends to perform better than CN and CNA. This is perhaps because no part of any balanced and dense network needs to be sampled much more frequently than others, and UN often performs well in this case  \cite{Sadhanala&Wang&Tibshirani2016}. However, real networks are usually far from balanced, and UN may be much less accurate than CN and CNA when they are applied to these networks, as we will show next.    

\begin{figure}[H]
\centering
\begin{subfigure}{0.33\linewidth}
\centering
\includegraphics[trim= 0 20 20 50,clip,width=1\textwidth]{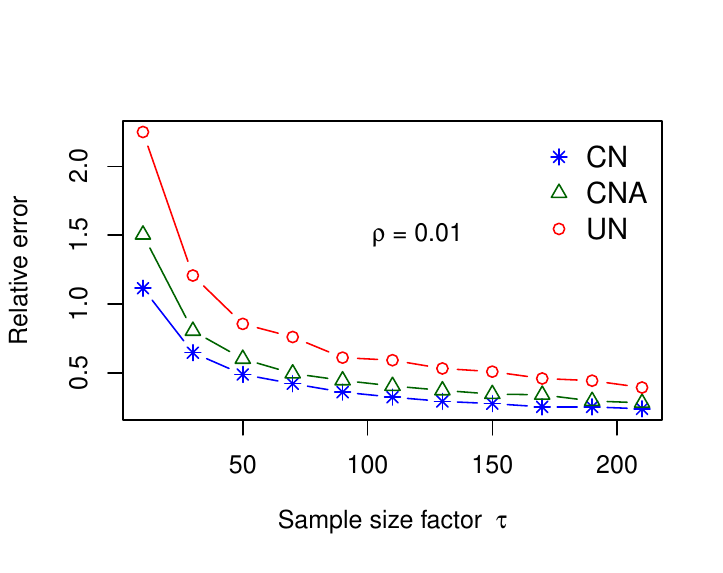}
\label{fig:ubl001}
\end{subfigure}%
\begin{subfigure}{0.33\linewidth}
\centering
\includegraphics[trim=0 20 20 50,clip,width=1\textwidth]{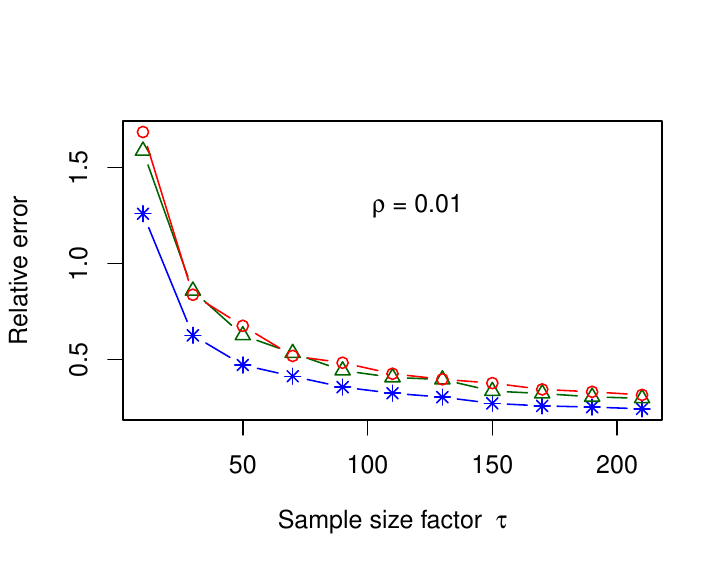}
\label{fig:ublm001}
\end{subfigure}%
\begin{subfigure}{0.33\linewidth}
\centering
\includegraphics[trim=0 20 20 50,clip,width=1\textwidth]{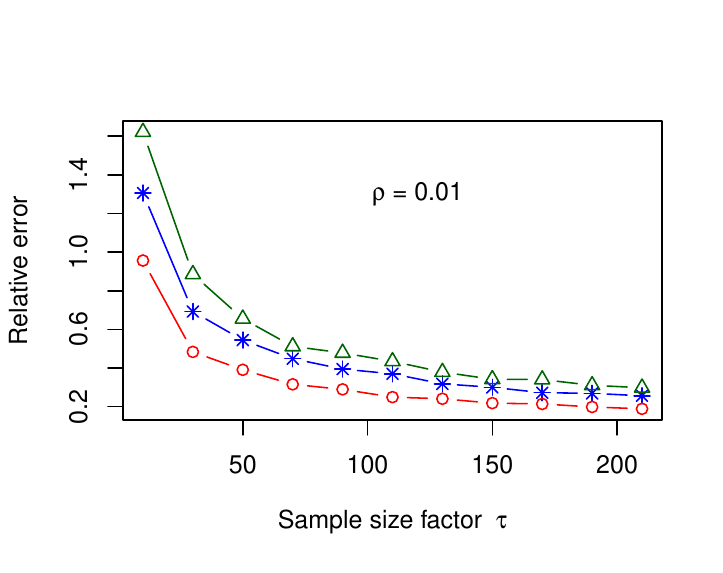}
\label{fig:bl001}
\end{subfigure} \\
\begin{subfigure}{0.33\linewidth}
\centering
\includegraphics[trim= 0 20 20 50,clip,width=1\textwidth]{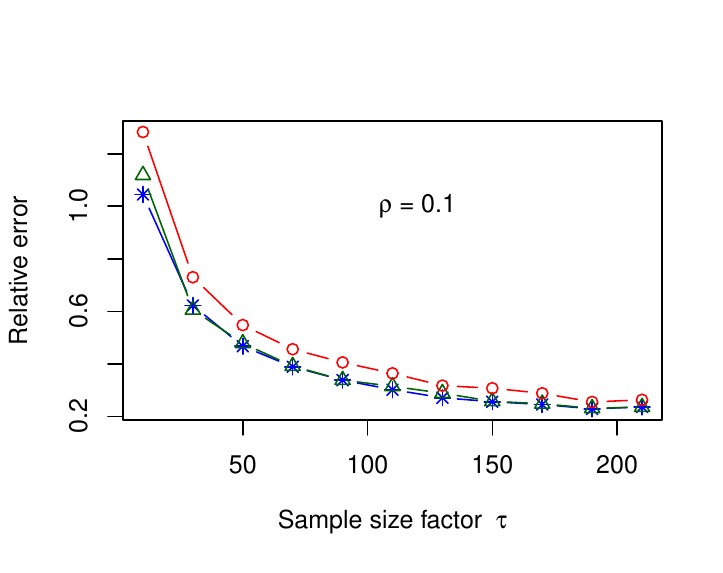}
\label{fig:ubl001}
\end{subfigure}%
\begin{subfigure}{0.33\linewidth}
\centering
\includegraphics[trim=0 20 20 50,clip,width=1\textwidth]{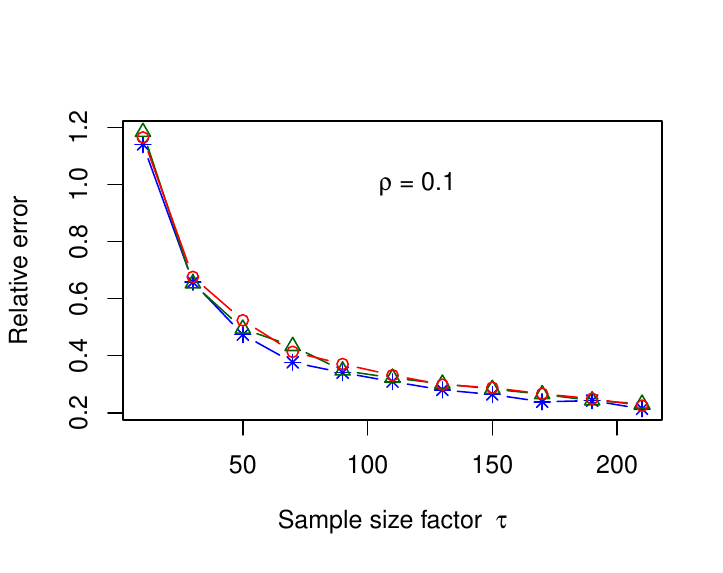}
\label{fig:ublm001}
\end{subfigure}%
\begin{subfigure}{0.33\linewidth}
\centering
\includegraphics[trim=0 20 20 50,clip,width=1\textwidth]{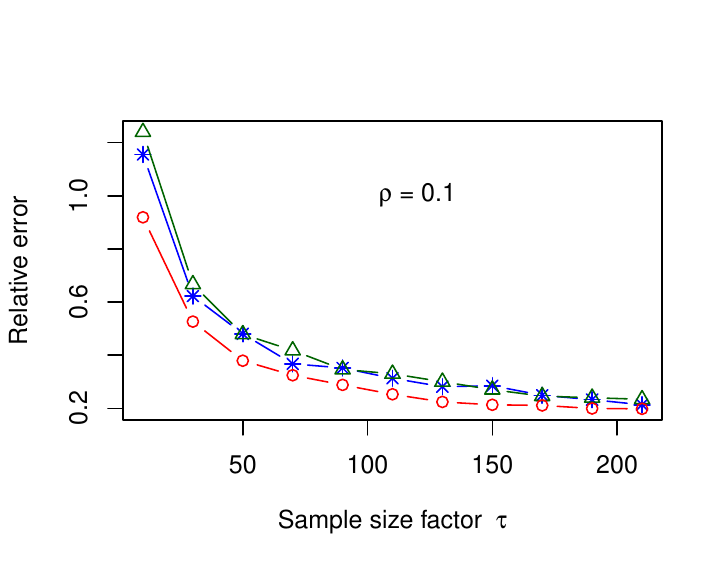}
\label{fig:bl001}
\end{subfigure}%
\caption{Accuracy of different sampling methods on random networks.}
\label{fig:relative error random network}
\end{figure}   
     
\subsubsection{Real-world networks}
We further compare the performance of CN, CNA and UN on the political blogs network \cite{Adamic05} and the Facebook ego network \cite{McAuley&Leskovec2012}, the two largest networks in Table~\ref{tb: stats} for which inverting the Laplacian can be done reasonably fast. (Note that the matrix inversion is only needed for calculating the relative error in \eqref{eq:rel error} while our proposed methods can easily handle all networks in Table~\ref{tb: stats}.) Similar to the analysis in the previous section, we vary the sample size $m$ by setting $m=\tau n$ with $\tau\in [10,50]$. To approximate the number of common neighbors for CNA, we sample $k=20$ neighbors according to  Algorithm~\ref{alg: num comm nbrs}. Figure~\ref{fig:relative error real network} shows that as $\tau$ increases,  all three methods perform better, with CN slightly better than CNA, both having much smaller errors than UN. 

\begin{figure}[H]
\centering
\begin{subfigure}{0.35\linewidth}
\centering
\includegraphics[trim=0 20 20 20,clip,width=1\textwidth]{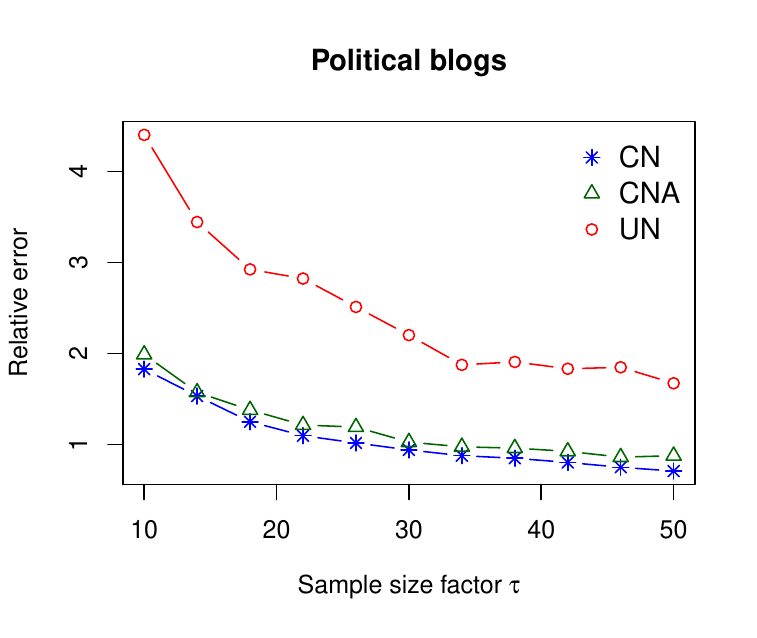}
\label{fig:ublm001}
\end{subfigure}%
\begin{subfigure}{0.35\linewidth}
\centering
\includegraphics[trim=0 20 20 20,clip,width=1\textwidth]{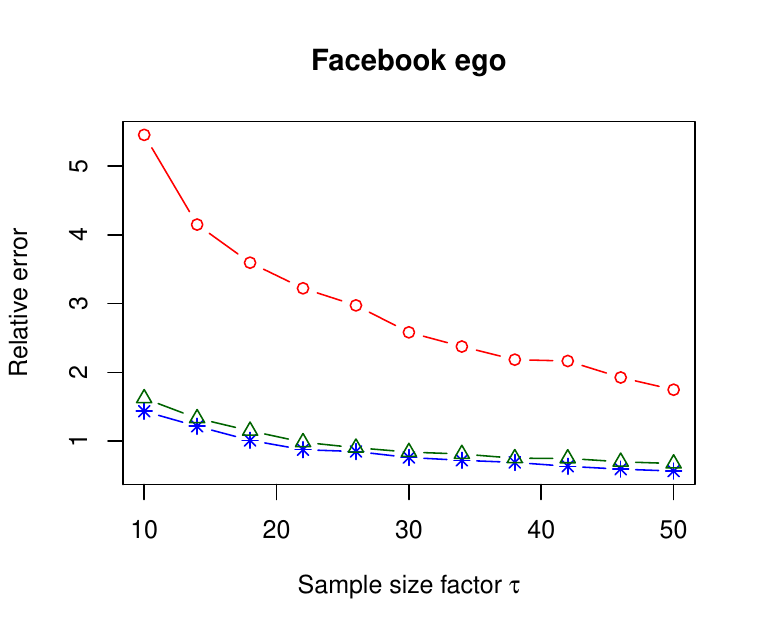}
\label{fig:bl001}
\end{subfigure} 
\caption{Accuracy of different sampling methods on real networks.}
\label{fig:relative error real network}
\end{figure}   

Note that the performance gap between UN and the proposed methods is much more visible on the real networks than on simulated networks shown in the previous section. This is probably due to the significant difference between the distributions of the number of common neighbors of simulated and real networks. As shown in Figure~\ref{fig:ncn},  most of the edges of the simulated networks have very large numbers of common neighbors, and for such nodes, the uniform sampling performs well (this is partially explained in the discussion following Theorem~\ref{thm: uniform edge sampling}). In contrast, edges of the real networks considered here have relatively smaller numbers of  common neighbors, resulting in the much worse performance of the uniform sampling. The numerical results on real networks show that sampling using the number of common neighbors may be very useful in practice.

\begin{figure}[H]
\centering
\begin{subfigure}{0.33\linewidth}
\centering
\includegraphics[trim=0 10 20 20,clip,width=1\textwidth]{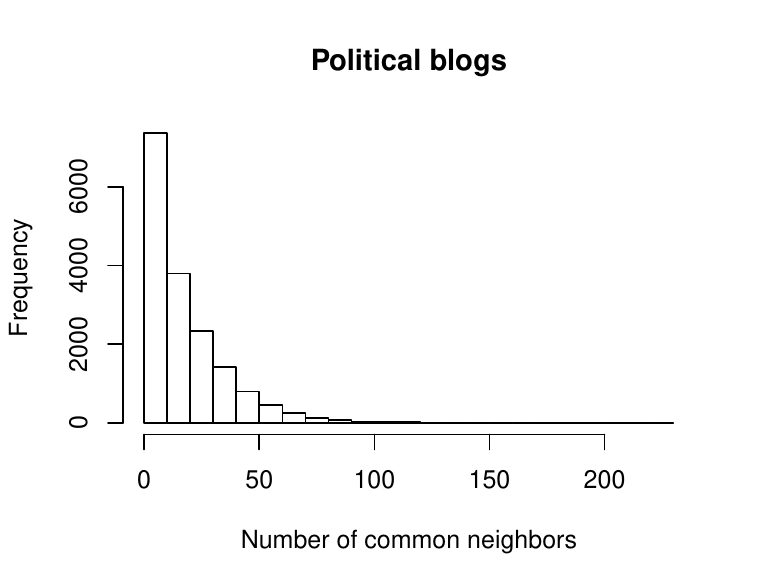}
\label{fig:ublm001}
\end{subfigure}%
\begin{subfigure}{0.33\linewidth}
\centering
\includegraphics[trim=0 10 20 20,clip,width=1\textwidth]{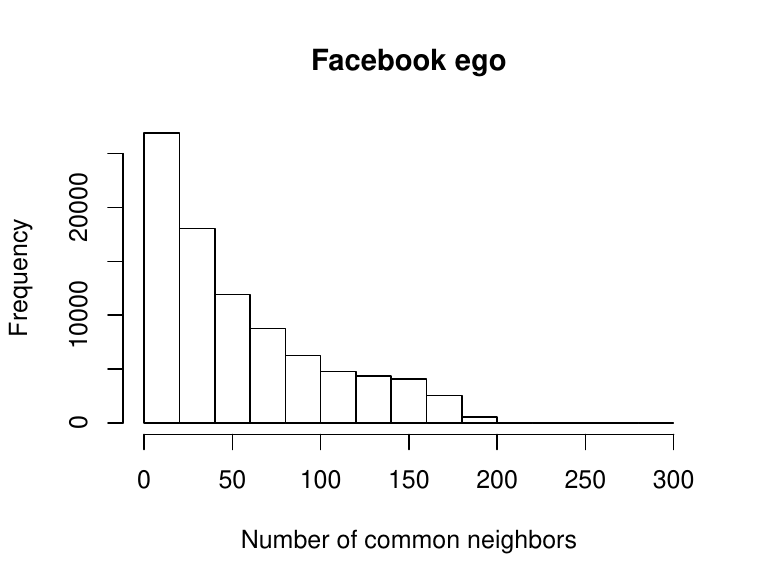}
\label{fig:bl001}
\end{subfigure} 
\begin{subfigure}{0.33\linewidth}
\centering
\includegraphics[trim=0 10 20 20,clip,width=1\textwidth]{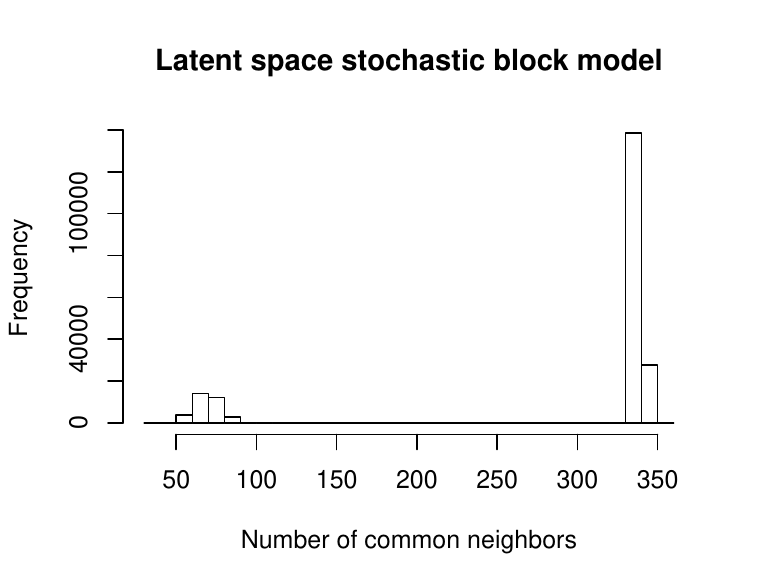}
\label{fig:bl001}
\end{subfigure} 
\caption{Distributions of the number of common neighbors for the political blogs network, the Facebook ego network  and a network generated from a latent space stochastic block model described in Section~\ref{sec:latant space sbm} with community size ratios $(1/3,1/3,1/3)$ and $\rho = 0.1$.}
\label{fig:ncn}
\end{figure}

\section{Discussion}
In this paper we study an edge sampling algorithm that uses only the number of common neighbors. This simple statistic provides an easy way to measure the strength of network local connectivity through parameter $\alpha$, which directly controls the accuracy of the sampling method. However, in practice we often have access to not only the numbers of common neighbors but also neighborhood networks around edges. In that case, we should use the information from these local networks, provided that it is available or easily computed, because it contains more structural information of the network than just the numbers of common neighbors. Measuring the strength of local connectivity through local networks is more challenging and we leave it for future work.    


\appendix


\section{Proofs of results in Section~\ref{sec: theory}}\label{subsec: proof of main theorems}

Theorem~\ref{thm: main theorem} directly follows from Theorem~\ref{thm: uniform edge sampling} with $\hat{t}_{ij} = t_{ij}$ for all edges $(i,j)\in E_G$ and $C=1$. To prove Theorem~\ref{thm: uniform edge sampling}, we use the following result about the concentration of the sum of random matrices \cite{VershyninNote2009}.

\begin{theorem}[Concentration of sum of matrices]\label{thm: concentration of sum of matrices}
Let $Y_1,...,Y_m$ be independent $n\times n$ random positive semidefinite matrices such that $\|Y_k\|\le M$ for all $1\le k \le m$. Let $S_m=\sum_{k=1}^m Y_k$ and $E = \sum_{k=1}^m \| \E Y_k\|$. Then for every $\varepsilon\in (0,1)$ we have
$$
\Pr{\|S_m-\E S_m\| > \varepsilon E} \le n\cdot \exp\left(\frac{-\varepsilon^2 E}{4M}\right).
$$
\end{theorem}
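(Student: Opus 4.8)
The plan is to prove this via the matrix Laplace transform method in the style of Ahlswede--Winter. Writing $X_k = Y_k - \E Y_k$ for the centered summands, I would first reduce the two-sided event to two largest-eigenvalue tails: since $S_m - \E S_m$ is symmetric, the event $\|S_m - \E S_m\| > \e E$ forces either $\lambda_{\max}(\sum_k X_k) > \e E$ or $\lambda_{\max}(\sum_k (-X_k)) > \e E$, so a union bound lets me treat one tail and then repeat the argument verbatim for $-X_k$. For a fixed tail and any $t > 0$, the exponential Markov inequality applied to the trace exponential gives $\Pr{\lambda_{\max}(\sum_k X_k) > \e E} \le e^{-t\e E}\,\E\tr\exp(t\sum_k X_k)$, since $e^{t\lambda_{\max}(Z)} \le \tr e^{tZ}$ for symmetric $Z$.

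The main obstacle is that the $X_k$ do not commute, so the exponential of the sum does not factor naively. I would resolve this with the Golden--Thompson inequality $\tr e^{A+B} \le \tr(e^A e^B)$, peeling off one independent term at a time: conditioning on $X_1,\dots,X_{m-1}$, using independence, and then applying $\tr(PQ) \le \|Q\|\,\tr P$ for $P \succeq 0$ with $P = \exp(t\sum_{k<m}X_k)$ and $Q = \E e^{tX_m} \succeq 0$, one obtains $\E\tr\exp(t\sum_{k=1}^m X_k) \le \|\E e^{tX_m}\|\,\E\tr\exp(t\sum_{k=1}^{m-1}X_k)$. Iterating down to the empty sum ($\tr I = n$) yields the Ahlswede--Winter bound $\E\tr\exp(t\sum_k X_k) \le n\prod_k\|\E e^{tX_k}\|$.

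Next I would estimate each matrix moment generating function $\|\E e^{tX_k}\|$ using only the one-sided bound $X_k \preceq MI$, which holds because $Y_k \preceq MI$ and $\E Y_k \succeq 0$. Since the scalar map $x \mapsto (e^{tx}-1-tx)/x^2$ is increasing, on the spectrum of $X_k$ (all eigenvalues $\le M$) it is dominated by its value at $M$, which lifts to the operator inequality $e^{tX_k} \preceq I + tX_k + \frac{e^{tM}-1-tM}{M^2}X_k^2$. Taking expectations kills the linear term ($\E X_k = 0$), and using $\E X_k^2 \preceq \E Y_k^2 \preceq M\,\E Y_k$ (the last step because $0 \preceq Y_k \preceq MI$ forces $Y_k^2 \preceq MY_k$), I get $\E e^{tX_k} \preceq \exp(\frac{e^{tM}-1-tM}{M^2}\E X_k^2)$ and hence $\|\E e^{tX_k}\| \le \exp(\frac{e^{tM}-1-tM}{M}\|\E Y_k\|)$. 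Multiplying over $k$ converts $\sum_k\|\E Y_k\|$ into $E$.

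Finally I would optimize the free parameter. Collecting the pieces gives $\Pr{\lambda_{\max}(\sum_k X_k) > \e E} \le n\exp(\frac{E}{M}(e^{tM}-1-(1+\e)tM))$, whose exponent is minimized at $tM = \log(1+\e)$, producing $\frac{E}{M}(\e - (1+\e)\log(1+\e))$. The elementary inequality $(1+\e)\log(1+\e) - \e \ge \e^2/4$ for $\e \in (0,1)$ then bounds each tail by $n\exp(-\e^2 E/(4M))$, and combining the two tails (the harmless factor of two being absorbed into the constant, or into the ample slack in the preceding inequality) completes the proof. The only genuinely delicate ingredient is the Golden--Thompson step controlling the noncommutativity; everything else is a careful but routine transcription of the scalar Chernoff/Bernstein derivation into operator inequalities.
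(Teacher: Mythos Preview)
The paper does not prove this theorem at all; it is quoted verbatim from Vershynin's notes (the citation \texttt{VershyninNote2009}) and used as a black box in the proof of Theorem~\ref{thm: main theorem}. So there is no ``paper's own proof'' to compare against.

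That said, your proposal is exactly the standard Ahlswede--Winter argument that underlies the cited result, and every substantive step is correct: the trace--Laplace reduction, the Golden--Thompson peeling to get $\E\tr\exp(t\sum_k X_k)\le n\prod_k\|\E e^{tX_k}\|$, the operator Bennett bound via $e^{tx}\le 1+tx+\frac{e^{tM}-1-tM}{M^2}x^2$ on $(-\infty,M]$, the variance control $\E X_k^2\preceq \E Y_k^2\preceq M\,\E Y_k$, and the optimization at $tM=\log(1+\e)$. The only soft spot is the final sentence: the union bound over the upper and lower tails yields $2n\exp(-\e^2 E/(4M))$, and your claim that the factor $2$ is ``absorbed into the slack'' is not literally true uniformly in $E,M,\e$. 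One clean fix is to note that whenever the stated bound is nontrivial (i.e.\ $n\exp(-\e^2 E/(4M))<1$, hence $\e^2 E/M>4\log n$), the sharper scalar inequality $(1+\e)\log(1+\e)-\e\ge 3\e^2/8$ on $(0,1)$ gives enough room to swallow the $2$ for all $n$ beyond the first few; alternatively, just state the conclusion with $2n$ in place of $n$, which is how the inequality is often recorded and which changes nothing in the downstream application.
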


\begin{proof}[Proof of Theorem~\ref{thm: uniform edge sampling}]
Let $X$ be a random matrix such that
$$X = \frac{1}{p_{ij}}(e_i-e_j)(e_i-e_j)^\tran \quad \text{with probability } \ \hat{p}_{ij},$$
where $(i,j)\in E_G$, $\{e_i, 1\le i\le n\}$ are standard basis vectors (the $i$th entry of $e_i$ is one and all other entries are zero), and 
\begin{equation}\label{eq: p hat}
\hat{p}_{ij} = \frac{\frac{2}{\hat{t}_{ij}+2}}{\sum_{(i,j)\in E_G}\frac{2}{\hat{t}_{ij}+2}}.
\end{equation}
Then
\begin{equation}\label{eq: E X}
\E X = \sum_{(i,j)\in E_G} \hat{p}_{ij} \times \frac{1}{\hat{p}_{ij}}(e_i-e_j)(e_i-e_j)^\tran = L_G.
\end{equation}
Let $X_k$ be $m$ independent copies of $X$. By the sampling scheme we have 
$$L_H = \frac{1}{m}\sum_{k=1}^m X_k, \quad \E L_H =  L_G.$$
Denote by $L_G^{-1}$ the Moore-Penrose pseudoinverse of $L_G$ and by $L_G^{-1/2}$ the squared root of $L_G^{-1}$.
Note that the kernel of the map $L_G$ is a one-dimensional vector space spanned by the all-one vector $\onevector$ and it is contained in the kernel of $L_H$. Therefore the strong spectral property \eqref{eq: spectral property} is equivalent to
\begin{equation}\label{eq: spectral property 1}
(1-\e)I_\onevector \preceq  \frac{1}{m}\sum_{k=1}^m  L_G^{-1/2} X_k L_G^{-1/2} \preceq (1+\e) I_\onevector,
\end{equation}
where $I_\onevector = I-(1/n)\onevector\onevector^\tran$ is the identity map on the $(n-1)$-dimensional subspace orthogonal to the all-one vector $\onevector$. Here, we write $U\preceq V$ if $V-U$ is positive semidefinite. 

To prove \eqref{eq: spectral property 1}, we apply Theorem~\ref{thm: concentration of sum of matrices} to  $Y_k := L_G^{-1/2} X_k L_G^{-1/2}$. Since $X_k\succeq 0$ and $\E X_k= L_G$ by \eqref{eq: E X}, it follows that $Y_k \succeq 0$ and $\|\E Y_k\|= \|I_\onevector\| = 1$. To bound $\|Y_k\|$, note that $Y_k$ takes one of the following matrix values
$$
\frac{1}{\hat{p}_{ij}} \left(L_G^{-1/2} (e_i -  e_j)\right)
\left(L_G^{-1/2} (e_i - e_j)\right)^\tran, \quad (i,j)\in E_G.
$$ 
By \eqref{eq: p hat} and \eqref{eq: alpha hat} we have $1/\hat{p}_{ij}= n\hat{\a}(\hat{t}_{ij}+2)/2$. Therefore
\begin{equation}\label{eq: bound Yk 1}
\|Y_k\| \le \max_{(i,j)\in E_G} \frac{n\hat{\a}(\hat{t}_{ij}+2)}{2} \cdot(e_i-e_j)^\tran L_G^{-1}(e_i-e_j).
\end{equation}
Note that $(e_i-e_j)^\tran L_G^{-1}(e_i-e_j)$ is the effective resistance of the edge between $i$ and $j$ \cite{Ghosh&Boyd&Saberi2008}. We claim that it is upper bounded by $2/(t_{ij}+2)$. To show that, let $N_{ij}$ be the set of common neighbors of $i$ and $j$. Denote by $G_{ij}=(V_{ij},E_{ij})$ the subgraph of $G$ such that
$$ V_{ij}=\{i,j\}\cup N_{ij}, \qquad E_{ij} = \{(i,j),(i,k),(j,k): k\in N_{ij}\}.$$ 
Thus, $G_{ij}$ consists of an edge and $t_{ij}$ paths of length two between $i$ and $j$. It is easy to see that the effective resistance $(e_i-e_j)^\tran L^{-1}_{G_{ij}}(e_i-e_j)$ of the edge between $i$ and $j$ in $G_{ij}$ is $2/(t_{ij}+2)$. Indeed, let $x = L^{-1}_{G_{ij}}(e_i-e_j)$. Then $L_{G_{ij}} x = e_i-e_j$ and by comparing the $i$-th and $j$-th components of $L_{G_{ij}} x$ and $e_i-e_j$, we have  
$$
(t_{ij}+1)x_i - x_j - \sum_{k\in N_{ij}} x_k = 1, \qquad x_i- (t_{ij}+1)x_j+\sum_{k\in N_{ij}} x_k = 1.
$$
Adding these equalities, we obtain that the effective resistance of the edge between $i$ and $j$ in $G_{ij}$ is $(e_i-e_j)^\tran x=x_i-x_j = 2/(t_{ij}+2)$. Since $G_{ij}$ is a subgraph of $G$ and adding edges does not increase the effective resistance (see e.g. Corollary 9.13 in \cite{Levin&Peres.mixing.2017}), it follows that
$$
(e_i-e_j)^\tran L_G^{-1}(e_i-e_j) \le (e_i-e_j)^\tran L^{-1}_{G_{ij}}(e_i-e_j) = \frac{2}{t_{ij}+2}.
$$
Together with \eqref{eq: bound Yk 1} this implies $\|Y_k\| \le n\hat{\a}(\hat{t}_{ij}+2)/(t_{ij}+2)\le n\hat{\a} C$. Therefore by Theorem~\ref{thm: concentration of sum of matrices} we have
$$
\Pr{\left\|\frac{1}{m}\sum_{k=1}^m Y_k-I_{\onevector}\right\| >\varepsilon } \le n\cdot \exp\left(\frac{-\varepsilon^2 m}{4C\hat{\a} n}\right).
$$
Inequality \eqref{eq: spectral property 1} then follows by choosing $m=8C\hat{\a} n \log n /\varepsilon^2$.
\end{proof}


\section{Proofs of results in Section~\ref{sec: bound on alpha}}

\begin{proof}[Proof of Theorem~\ref{lem: upper bound er}] 
Let $(i,j)$ be an edge of $G$ such that 
$\kappa(i,j)\ge \kappa_0$ or equivalently $W_1(m_i,m_j) \le 1-\kappa_0$. Recall that 
$$
W_1(m_i,m_j) = \inf_{\xi\in\Pi (m_i,m_j)}\sum_{(k,k')\in V\times V} d(k,k')\xi(k,k'),
$$  
where $\Pi(m_i,m_j)$ is the set of all probability measures on $V\times V$ with marginals $m_i$ and $m_j$. For every $\xi\in\Pi(m_i,m_j)$,
$$
\xi(k,k) \le \min\left\{\sum_{k'\in V} \xi(k',k),\sum_{k'\in V} \xi(k,k')\right\} = \min\{m_j(k),m_i(k)\}.
$$
Since $m_i$ and $m_j$ are uniform measures with supports being the sets of neighbors of $i$ and $j$, respectively, if $k$ is not a common neighbor of $i$ and $j$ then 
$\min\{m_j(k),m_i(k)\} = 0$; when $k$ is one of the $t_{ij}$ common neighbors of $i$ and $j$ then $\min\{m_j(k),m_i(k)\} = \min\{1/d_i,1/d_j\}$. Therefore
$$
\sum_{k\in V}\xi(k,k) \le t_{ij}\cdot \min\{1/d_i,1/d_j\},
$$
which implies
\begin{eqnarray*}
\sum_{(k,k')\in V\times V} d(k,k')\xi(k,k') &=& \sum_{k\neq k'} d(k,k')\xi(k,k') \\
&\ge& \sum_{k\neq k'} \xi(k,k')\\
&=& 1 - \sum_{k\in V}\xi(k,k) \\
&\ge& 1 - t_{ij}\cdot \min\{1/d_i,1/d_j\}.
\end{eqnarray*}
Taking the infimum over all $\xi\in\Pi(m_i,m_j)$, we have
$$
1-\kappa_0 \ge W_1(m_i,m_j) \ge 1-t_{ij}\cdot \min\{1/d_i,1/d_j\},
$$
or $t_{ij}\ge \kappa_0/\min\{1/d_i,1/d_j\}$.
Denote by $\mathcal{E}$ the set of all edges of $G$ such that $\kappa(i,j)\ge \kappa_0$ and by $\mathcal{E}^c$ its complement. Since $|\mathcal{E}^c|\le Cn$ by assumption, 
\begin{eqnarray*}
\alpha &=& \frac{1}{n}\sum_{(i,j)\in\mathcal{E}} \frac{2}{2+t_{ij}} + \frac{1}{n}\sum_{(i,j)\in\mathcal{E}^c} \frac{2}{2+t_{ij}} \\
&\le& \frac{2}{\kappa_0n}\sum_{(i,j)\in \mathcal{E}} \min\{1/d_i,1/d_j\} + C\\
&\le& \frac{1}{\kappa_0n}\sum_{i\in V}\sum_{j\in N_i} \min\{1/d_i,1/d_j\} + C \\
&\le& \frac{1}{\kappa_0} + C.
\end{eqnarray*}
For the last inequality, we use the fact that $\sum_{j\in N_i} \min\{1/d_i,1/d_j\} \le 1$. 
\end{proof}

For proving Theorem~\ref{lem: lower bound resistance sum}, we need the following lemma.

\begin{lemma}\label{lem: simple inequality}
For positive numbers $x_1,x_2,...,x_k$ the following inequality holds
\begin{equation*}\label{eq: simple inequality}
\left(x_1+x_2+\cdots+x_k\right)\left(\frac{1}{x_1}+\frac{1}{x_2}+\cdots+\frac{1}{x_k}\right)\ge k^2.
\end{equation*}
The two sides are equal if and only if $x_1=x_2=\cdots = x_n$.
\end{lemma}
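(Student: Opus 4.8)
The plan is to recognize Lemma~\ref{lem: simple inequality} as the Cauchy--Schwarz inequality (equivalently, the AM--HM inequality) and give a short self-contained proof. First I would apply Cauchy--Schwarz to the vectors $u=(\sqrt{x_1},\dots,\sqrt{x_k})$ and $v=(1/\sqrt{x_1},\dots,1/\sqrt{x_k})$: since $\ip{u}{v}=\sum_{i=1}^k \sqrt{x_i}\cdot\frac{1}{\sqrt{x_i}}=k$ while $\|u\|^2=\sum_{i=1}^k x_i$ and $\|v\|^2=\sum_{i=1}^k 1/x_i$, the bound $\ip{u}{v}^2\le \|u\|^2\|v\|^2$ becomes exactly $k^2\le\left(\sum_{i=1}^k x_i\right)\left(\sum_{i=1}^k 1/x_i\right)$, which is the claim.

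Alternatively, and perhaps more elementarily, I would expand the product directly. Grouping the diagonal and off-diagonal terms gives
$$
\left(\sum_{i=1}^k x_i\right)\left(\sum_{j=1}^k \frac{1}{x_j}\right)=k+\sum_{1\le i<j\le k}\left(\frac{x_i}{x_j}+\frac{x_j}{x_i}\right),
$$
and then I would use the one-variable inequality $t+1/t\ge 2$ for every $t>0$ (a consequence of $(\sqrt{t}-1/\sqrt{t})^2\ge 0$) applied with $t=x_i/x_j$ to each of the $\binom{k}{2}$ summands. This yields the lower bound $k+2\binom{k}{2}=k+k(k-1)=k^2$.

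For the equality characterization, in the Cauchy--Schwarz route equality holds if and only if $u$ and $v$ are proportional, which forces $x_i$ to be a common constant; in the expansion route, equality requires $x_i/x_j+x_j/x_i=2$ for every pair, hence $x_i=x_j$ for all $i,j$. (The lemma statement's "$x_n$'' in the equality clause is evidently a typo for "$x_k$''.)

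There is no real obstacle here: the inequality is classical, and the only points requiring a little care are the bookkeeping in the expansion — there are $\binom{k}{2}$ unordered pairs, each contributing the two cross terms $x_i/x_j$ and $x_j/x_i$ — and stating the equality condition (all $x_i$ equal) precisely. I would present whichever of the two proofs is shorter in the paper's style; the expansion argument has the advantage of being completely elementary and making the equality case transparent.
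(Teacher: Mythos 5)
Your proof is correct, but it takes a different route from the paper. The paper's proof applies the AM--GM inequality separately to each factor: $x_1+\cdots+x_k\ge k(x_1\cdots x_k)^{1/k}$ and $1/x_1+\cdots+1/x_k\ge k(x_1\cdots x_k)^{-1/k}$, and multiplies the two bounds so that the geometric means cancel. Your two arguments --- Cauchy--Schwarz applied to $(\sqrt{x_i})$ and $(1/\sqrt{x_i})$, and the direct expansion into $k+\sum_{i<j}(x_i/x_j+x_j/x_i)\ge k+2\binom{k}{2}=k^2$ --- are both valid and standard alternatives. The trade-offs are minor: the paper's AM--GM route is the shortest to state but leaves the equality case slightly implicit (one must note that equality in the product forces equality in both AM--GM applications, each of which forces all $x_i$ equal); your expansion argument is the most elementary and makes the equality condition completely transparent, since it reduces to $x_i/x_j+x_j/x_i=2$ for every pair; the Cauchy--Schwarz route is the most succinct conceptually, with equality coming from proportionality of the two vectors. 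You are also right that the ``$x_n$'' in the equality clause of the statement is a typo for ``$x_k$''. Any of the three proofs would serve the paper equally well.
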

\begin{proof}[Proof of Lemma~\ref{lem: simple inequality}]
Using the inequality of arithmetic and geometric means, we have
$$
x_1+x_2+\cdots+x_k\ge k(x_1x_2\cdots x_k)^{1/k}, \quad
\frac{1}{x_1}+\frac{1}{x_2}+\cdots+\frac{1}{x_k} \ge k(x_1x_2\cdots x_k)^{-1/k}.
$$
Lemma~\ref{lem: simple inequality} follows directly from these inequalities.
\end{proof}

\begin{proof}[Proof of Theorem~\ref{lem: lower bound resistance sum}]
For each node $i$, denote by $N_i$ and $t_i$ the set of neighbors of $i$ and the number of triangles that contain $i$, respectively. Using Lemma~\ref{lem: simple inequality}, we have
$$
\sum_{j\in N_i}\frac{2}{t_{ij}+2} 
\ge \frac{2|N_i|^2}{\sum_{j\in N_i} (t_{ij}+2)} 
= \frac{d_i^2}{t_i + d_i} 
\ge \frac{1}{2c_i+\frac{1}{d_i}}.
$$  
Summing over all nodes $i$ and applying Lemma~\ref{lem: simple inequality} again, we obtain
$$
\sum_{(i,j)\in E}\frac{4}{t_{ij}+2} 
\ge \sum_{i\in V} \frac{1}{2c_i+\frac{1}{d_i}}
\ge \frac{|V|^2}{\sum_{i\in V}\left(2c_i+\frac{1}{d_i}\right)}
= \frac{n}{2c+ \frac{1}{n}\sum_{i\in V}\frac{1}{d_i}}.
$$
The proof is complete by dividing both sides of this inequality by $2n$.
\end{proof}

\section{Proofs of results in Section~\ref{sec: random graphs}}

\begin{proof}[Proof of Theorem~\ref{thm: alpha bound random graph}]
We rewrite $S := n\alpha$ as follows:
\begin{eqnarray*}
S = \sum_{(i,j)\in E_G} \frac{2}{t_{ij}+2} =  \sum_{i<j} \frac{2A_{ij}}{t_{ij}+2} =: \sum_{i<j} Y_{ij}.
\end{eqnarray*}
The proof consists of two parts: showing that $S\le 2\E S$ with high probability and upper bounding $\E S$.
For the first part, note that $S$ is a sum of $n(n-1)/2$ weakly depdendent random variables $Y_{ij} = 2A_{ij}/(t_{ij}+2)$, where $Y_{ij}$ and $Y_{i'j'}$ are independent if $i'\neq i$ and $j'\neq j$. To deal with the dependence among $Y_{ij}$, we will use the moment method (see for example \cite{Warnke2017}).  

For notational simplicity, we denote $\gamma=\{i,j\}$ as a set of two elements $i,j$ and write $S = \sum_\gamma Y_\gamma$. With the new notation, $Y_\gamma$ and $Y_{\gamma'}$ are independent if $\gamma\cap\gamma' = \emptyset$. 
For a positive integer $k$, let
$$
M_k = \sum_{(\gamma_1,...,\gamma_k)} \prod_{i=1}^k Y_{\gamma_i},
$$
where the sum is over all $k$-tuples $(\gamma_1,...,\gamma_k)$ such that $\gamma_i\cap\gamma_j = \emptyset$ if $i\neq j$. Since  $Y_{\gamma_1},Y_{\gamma_2},...,Y_{\gamma_k}$ are independent by construction,
\begin{equation}\label{eq: expected m bound}
\E M_k =\sum_{(\gamma_1,...,\gamma_k)} \prod_{i=1}^k \E Y_{\gamma_i} \le \left(\sum_\gamma \E Y_\gamma\right)^k = \left(\E S\right)^k.
\end{equation}
Denote by $\mathcal{E}$ the event that $S >2\E S$. When $\mathcal{E}$ occurs,
\begin{eqnarray*}
M_{k+1} &=& \sum_{(\gamma_1,...,\gamma_k)} \prod_{i=1}^k Y_{\gamma_i}\left(\sum_{\gamma}Y_{\gamma}-\sum_{\gamma\cap \gamma_i \neq \emptyset \text{ for some }1\le i\le k}Y_{\gamma}\right) \\
&\ge& \sum_{(\gamma_1,...,\gamma_k)} \prod_{i=1}^k Y_{\gamma_i}\left(2\E S -\sum_{\gamma\cap \gamma_i \neq \emptyset \text{ for some }1\le i\le k}Y_{\gamma}\right).
\end{eqnarray*}
For each $(\gamma_1,...,\gamma_k)$ we have
$$
\sum_{\gamma\cap \gamma_i \neq \emptyset \text{ for some }1\le i\le k}Y_{\gamma} \le 
\sum_{i=1}^k \sum_{\gamma\cap \gamma_i\neq \emptyset} Y_\gamma \le k \cdot \max_{\gamma'} \sum_{\gamma\cap \gamma'\neq \emptyset} Y_\gamma.
$$
Let $Z = \max_{i} \sum_{j=1}^n A_{ij}$ be the maximal node degree. Since $Y_\gamma \le A_\gamma$, 
$$
\max_{\gamma'} \sum_{\gamma\cap \gamma'\neq \emptyset} Y_\gamma \le \max_{\gamma'} \sum_{\gamma\cap \gamma'\neq \emptyset} A_\gamma \le 2Z.
$$
Therefore if $\mathcal{E}$ occurs then
\begin{equation}\label{eq: Mk iter bound}
M_{k+1} \ge \sum_{(\gamma_1,...,\gamma_k)} \prod_{i=1}^k Y_{\gamma_i}\left(2\E S -2kZ\right) = M_k\left(2\E S-2kZ\right).
\end{equation}

We now show that $2\E S-2kZ \ge (3/2)\cdot\E S>0$ with high probability for $k=O(\log n)$, so the above inequality can be applied repeatedly to obtain a desired lower bound for $M_k$. 
Since upper diagonal elements of $A$ are independent, by Jensen's inequality we have
\begin{eqnarray*}
\E S
= \sum_{i<j} P_{ij}\cdot \E\left[ \frac{2}{t_{ij}+2}\right] 
\ge \sum_{i<j}  \frac{2P_{ij}}{\E t_{ij}+2}\ge \ \frac{2}{\max_{i,j}\E t_{ij}+2}\cdot \sum_{i<j} P_{ij}.
\end{eqnarray*}
The second inequality of \eqref{eq: max condition} then implies
$$
\E S \ge C \Delta\log n.
$$
Let $\mathcal{E}_1$ be the event that $Z\le 2 \Delta$. Since $\Delta>C\log n$, it follows from the Chernoff and union bounds that
\begin{equation}\label{eq: max degree bound}
\P(\mathcal{E}_1) = \P(Z\le 2\Delta) \ge 1-\frac{1}{2n}.
\end{equation}
When $\mathcal{E}_1$ occurs, 
$$
2\E S - 2kZ = \frac{3\E S}{2} + \frac{\E S}{2} - 2kZ \ge \frac{3\E S}{2} + \frac{ C \Delta\log n}{2} - 4k\Delta \ge \frac{3\E S}{2}
$$
for all $k\le m$, where $m=\lfloor (C/8)\log n\rfloor$ is the largest integer not greater than $(C/8)\log n$. Therefore if $\mathcal{E}\cap\mathcal{E}_1$ occurs then $M_1=S>2\E S$ and by applying \eqref{eq: Mk iter bound} repeatedly,
$$
M_{m} \ge M_{m-1}\cdot\frac{3\E S}{2} \ge \cdots\ge \left(\frac{3\E S}{2}\right)^{m}.
$$
Using Markov's inequality, \eqref{eq: expected m bound} and \eqref{eq: max degree bound}, we have
\begin{eqnarray*}
\P(S>2\E S) &\le& \P(S>2\E S,Z\le 2\Delta) + \P(Z>2\Delta) \\
&\le& \P\left(M_{m} \ge \left[\frac{3\E S}{2}\right]^{m}\right) + \frac{1}{2n}\\
&\le& \left(\frac{2}{3}\right)^m + \frac{1}{2n} \\
&\le& \frac{1}{n},
\end{eqnarray*}
where the last inequality holds for sufficiently large $C$. Thus, $S\le 2\E S$ with probability at least $1-1/n$.

It remains to bound 
\begin{eqnarray*}
\E S = \sum_{i<j} 2P_{ij}\cdot \E\left[ \frac{1}{t_{ij}+2}\right].
\end{eqnarray*}
For every pair of nodes $(i,j)$ we have 
\begin{eqnarray}\label{eq: deviation 1/T}
\left|\E \frac{1}{t_{ij}+2}-\frac{1}{\E t_{ij}+2}\right| = \frac{1}{\E t_{ij}+2}\cdot\left|\E \frac{t_{ij}-\E t_{ij}}{t_{ij}+2}\right| \le \frac{1}{\E t_{ij}+2}\cdot\E \frac{|t_{ij}-\E t_{ij}|}{t_{ij}+2}. 
\end{eqnarray}
Since $t_{ij}$ is the sum of $n$ independent Bernoulli random variables, by Chernoff bound,
$$
\P\left(t_{ij}\le\E t_{ij}/2\right) \le \exp\left(-\frac{\E t_{ij}}{8}\right). 
$$
Consider the function $g(x) = |x-\E t_{ij}|/(x+2)$ with $x\ge 0$. It is easy to show that $g(x)\le \E t_{ij}$ if $x\le\E t_{ij}/2$ and
$g(x) \le 1$ if $x> \E t_{ij}/2$.
Therefore
\begin{eqnarray*}
\E \frac{|t_{ij}-\E t_{ij}|}{t_{ij}+2} &=& \E g(t_{ij})\cdot I(t_{ij}\le\E t_{ij}/2) + \E g(t_{ij})\cdot I(t_{ij}>\E t_{ij}/2)  \\
&\le& \E t_{ij}\cdot \P(t_{ij}\le\E t_{ij}/2) + \P(t_{ij}>\E t_{ij}/2) \\
&\le& \E t_{ij}\cdot\exp\left(-\frac{\E t_{ij}}{8}\right)+1 \\
&\le& 4.
\end{eqnarray*}
The last inequality follows from the fact that $x\cdot\exp(-x/8)\le 3$ for all $x\ge 0$. By \eqref{eq: deviation 1/T} and the last inequality, we get
$$
\E \frac{1}{t_{ij}+2} \le \frac{5}{\E t_{ij}+2}.
$$
Finally,
\begin{eqnarray*}
\E S = \sum_{i<j} 2P_{ij}\cdot \E\left[ \frac{1}{t_{ij}+2}\right] \le 
\sum_{i<j} \frac{10P_{ij}}{\E t_{ij}+2}
\end{eqnarray*}
and the proof is complete.
\end{proof}

\begin{proof}[Proof of Corollary~\ref{cor: geometric graph}]
We first verify the conditions in Theorem~\ref{thm: alpha bound random graph}. By \eqref{eq: geometric graph condition}, 
$$
\Delta = \max_{i} \sum_{j=1}^n P_{ij} = \max_i n_i\delta \ge C^{-1}nr_n^d\delta \ge C\log n. 
$$
Therefore the first condition of \eqref{eq: max condition} is satisfied if $C$ is sufficiently large. 
Also, since
$$
\Delta\cdot \max_{i,j}\left[1+\left(P^2\right)_{ij}\right] = \max_i n_i\delta\cdot\max_{ij}(1+n_{ij}\delta^2) \le 2Cn^2r_n^{2d}\delta^3
$$
and 
$$
\frac{1}{\log n} \sum_{i<j} P_{ij}=\frac{1}{2\log n}\sum_{i=1}^n n_{ij}\delta\ge \frac{n^2 r_n^d \delta}{2C\log n}, 
$$
the second condition of \eqref{eq: max condition} holds because
$
r_n^d\le (4C^3\delta^2\log n)^{-1}
$. Therefore  by Theorem~\ref{thm: alpha bound random graph},  with probability at least $1-1/n$, the upper bound of $\alpha$ is
\begin{eqnarray*}
\frac{1}{n}\sum_{i<j}  \frac{10P_{ij}}{\E t_{ij}+2} &=& \frac{1}{n}\sum_{i=1}^n
\sum_{j:\|x_i-x_j\|\le r_n}  \frac{5\delta}{n_{ij}\delta^2+2} \\
 &\le& \frac{1}{n} \sum_{i=1}^n
\frac{5n_i\delta}{n_{ij}\delta^2+2} \\
&\le& 5C^2\delta^{-1},
\end{eqnarray*}
and the proof is complete.
\end{proof}

\section{Proof of results in Section~\ref{sec: hypergraphs}}

\begin{proof}[Proof of Lemma~\ref{lem: alpha tilde bound}]
By the definition of $E$ and $\mathcal{E}$, we have
$$
\sum_{(i,j)\in E} \tilde{t}_{ij}^{-1} \le \sum_{e\in \mathcal{E}}\sum_{\{i,j\}\subseteq e} \tilde{t}_{ij}^{-1}.
$$
Since $\tilde{t}_{ij} \ge |e|$ for each $e\in E_\mathcal{G}$ that contains $\{i,j\}$ and there are $|e|(|e|-1)/2$ pairs $\{i,j\}\in e$, it follows from above inequality that
$$
\sum_{(i,j)\in E} \tilde{t}_{ij}^{-1} \le
\sum_{e\in \mathcal{E}}\frac{|e|-1}{2}
\le \frac{1}{2} \sum_{e\in \mathcal{E}} |e| 
\le \frac{dn}{2}.
$$ 
For the last inequality we use the assumption that each node belongs to at most $d$ hyperedges.
\end{proof}

\begin{proof}[Proof of Theorem~\ref{lem: alpha bound hypergraph}]
The proof of this theorem is similar to the proof of Theorem~\ref{thm: main theorem} with one exception that we replace $\tilde{\alpha}$ with the upper bound $d/2$ shown in Lemma~\ref{lem: alpha tilde bound}. \end{proof}

\section{Proof of results in Section~\ref{sec: Tij calculation}  }

\begin{proof}[Proof of Theorem~\ref{thm: approx num comm nbrs}] Let $S$ be the sum of 
$k$ independent Bernoulli random variables with success probability $\theta$. Then by Bernstein's inequality, for any $\delta\in [0,1]$,  
\begin{equation}\label{eq: single theta hat}
\P\left(|S-k\theta|>\delta k\theta \right) 
\le 2\exp\left(\frac{-(\delta k \theta)^2/2}{k\theta+(\delta k \theta)/3}\right)
\le 2\exp\left(\frac{-\delta^2 k \theta}{4}\right).
\end{equation}
Let $\mathcal{E}$ be the set of all edges with $\theta_{ij}<\varepsilon/2$. 
For each $(i,j)\in \mathcal{E}$, $k\hat{\theta}_{ij}$ is stochastically bounded by $S$ with $\theta = \varepsilon/2$. Therefore by \eqref{eq: single theta hat} with $\delta=\varepsilon$, 
\begin{eqnarray*}
\P(\hat{\theta}_{ij}\ge \varepsilon) \le \P(S\ge k\varepsilon) \le \P(|S-k\varepsilon/2|\ge k\varepsilon/2) \le 2\exp\left(\frac{-k\e }{8}\right).
\end{eqnarray*}
Since $k = 100\log n/\varepsilon$, by the union bound,
\begin{equation}\label{eq: thetahat bound}
\P\left(\hat{\theta}_{ij}< \varepsilon \ \text{for all} \ (i,j)\in\mathcal{E}\right) \ge 1 - 2n^2\exp\left(\frac{-k\e}{8}\right)\ge 1 - \frac{1}{2n}.
\end{equation}
Consider now $\mathcal{E}^c$, the set of nodes with $\theta_{ij}\ge \varepsilon/2$. Then using \eqref{eq: single theta hat} with $\theta = \theta_{ij}$ and $\delta=1/2$, we get
\begin{eqnarray*}
\P(|\hat{\theta}_{ij}-\theta_{ij}|\ge \theta_{ij}/2) \le \P(|S-k\theta_{ij}|\ge k\theta_{ij}/2) \le 2\exp\left(\frac{-k\theta_{ij}}{16}\right) \le 2\exp\left(\frac{-k\varepsilon}{32}\right).
\end{eqnarray*}
Therefore with $k = 100\log n/\varepsilon^3$, we have
\begin{eqnarray}
\nonumber\P\left(\Big|\frac{\hat{t}_{ij}}{t_{ij}}-1\Big|\le \frac{1}{2} \ \text{for all} \ (i,j)\in \mathcal{E}^c\right) 
&=& \P\left(\Big|\frac{\hat{\theta}_{ij}}{\theta_{ij}}-1\Big|\le \frac{1}{2} \ \text{for all} \ (i,j)\in \mathcal{E}^c\right)\\
\nonumber&\le& 2n^2 \exp\left(\frac{-k\varepsilon}{32}\right)\\
\label{eq: That bound}&\le& 1 - \frac{1}{2n}.
\end{eqnarray}
Recall that for edges $(i,j)$ with $\min\{d_i,d_j\}>k$, we calculate $t_{ij}$ directly if $\hat{\theta}_{ij}<\varepsilon$ and estimate $t_{ij}$ by $\hat{t_{ij}} = \hat{\theta}_{ij}\cdot\min\{d_i,d_j\}$ otherwise. From \eqref{eq: thetahat bound} and \eqref{eq: That bound}, we obtain that $|\hat{t}_{ij}-t_{ij}|\le 1/2$ for all $(i,j)$ with probability at least $1-1/n$. The spectral property \eqref{eq: spectral property} then follows from Theorem~\ref{thm: uniform edge sampling}.

The computational complexity of estimating all $t_{ij}$ is bounded by
\begin{eqnarray*}
\sum_{(i,j)\in \mathcal{E}} \min\{d_i,d_j\} + \sum_{(i,j)\in \mathcal{E}^c }k \le \sum_{(i,j):\theta_{ij}\le \varepsilon/2}\min\{d_i,d_j\} + 100|E_G|\log n/\varepsilon.
\end{eqnarray*}
The proof is complete.
\end{proof}

\vskip 0.2in
\bibliography{allref}
\bibliographystyle{abbrv}
\end{document}